\numberwithin{equation}{section}
\newtheorem{theorem}{Theorem}[section]
\newtheorem{thmx}{Theorem}
\newtheorem{lemma}{Lemma}[section]
\theoremstyle{remark}
\newtheorem{example}{Example}[section]
\def\R{\mathbb{R}}
\def\C{\mathbb{C}}
\def\N{\mathbb{N}}
\def\O{\mathcal{O}}
\def\re{\operatorname{Re}}
\def\length{\operatorname{length}}
\begin{document}
\title{Entire functions with separated zeros and $1$-points}
\author{Walter Bergweiler and Alexandre Eremenko\thanks{Supported by NSF grant DMS-1665115.}}
\date{}
\maketitle
\centerline{\emph{Dedicated to the memory of Stephan Ruscheweyh}}
\begin{abstract}
We consider transcendental entire functions of finite order for which the zeros and $1$-points are in 
disjoint sectors.  Under suitable hypotheses on the sizes of these sectors we show 
that such functions must have a specific form, or that such functions do not exist at all.

\medskip

MSC 2020: 30D20, 30D35.

\medskip

Keywords: entire function, value distribution, sector, radially distributed value, subharmonic function.
\end{abstract}
\section{Introduction and results}
Our starting point is the following result of Biernacki~\cite[p.~533]{Biernacki1929}.
\begin{thmx} \label{thm-bier}
There is no transcendental entire function of finite order for which
the zeros accumulate in one direction and the $1$-points accumulate in
a different direction.
\end{thmx}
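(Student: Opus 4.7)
The plan is a proof by contradiction. Suppose $f$ is a transcendental entire function of finite order $\rho$ whose zeros accumulate only in direction $\alpha$ and whose $1$-points accumulate only in direction $\beta \ne \alpha$; by a rotation we may assume $\alpha=0$. The principal tool is the Phragmén--Lindelöf indicator
$$h_f(\theta) = \limsup_{r\to\infty} r^{-\rho}\log|f(re^{i\theta})|,$$
together with its analogue $h_{f-1}$ for $f-1$. The first observation is that the elementary inequalities $|f-1|\le|f|+1\le|f-1|+2$ force $h_f(\theta)=h_{f-1}(\theta)$ at every $\theta$ for which this common value is positive; hence the two indicators coincide on the open set where both are strictly positive.

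The technical heart of the argument is to exploit the angular concentration of zeros to identify $h_f$ explicitly outside the direction $\alpha = 0$. Consider the Hadamard factorization $f = e^{P}\Pi$, where $\Pi$ is the canonical product of genus at most $\lfloor\rho\rfloor$ formed from the zeros of $f$ and $\deg P \le \rho$. Because for every $\varepsilon > 0$ all but finitely many zeros of $\Pi$ lie in $|\arg z| < \varepsilon$, the classical indicator theory of canonical products whose zeros are confined to a ray (Pólya-Valiron-Levin) gives that $h_\Pi$, and hence $h_f$, is real-analytic on $(0,2\pi)$ and there of a specific $\rho$-trigonometric form depending only on the asymptotic angular behaviour of the zeros. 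Applying the same reasoning to $f-1$, whose zeros cluster only at $\beta$, yields that $h_{f-1}$ is real-analytic on $[0,2\pi)\setminus\{\beta\}$ and of an analogous $\rho$-trigonometric form reflecting clustering at $\beta$.

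The contradiction then emerges. On the open set where both indicators are strictly positive, $h_f$ and $h_{f-1}$ are equal real-analytic functions, so by analytic continuation they coincide throughout their common domain of analyticity. But the $\rho$-trigonometric shape of the indicator of a canonical product encodes the angular direction of zero accumulation, and indicators associated with clustering at the distinct rays $\arg z = 0$ and $\arg z = \beta$ are genuinely different functions.

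The main obstacle is the middle step: translating the qualitative hypothesis of angular clustering into a quantitative real-analytic form of the indicator. In general the indicator is only $\rho$-trigonometrically \emph{convex}, and the full $\rho$-trigonometric form requires angular regularity of the zeros, which must be extracted (for instance, by passing to a weak limit of the normalised zero-counting measure, which collapses to a point mass at the clustering direction). Additional care is required when $\rho$ is an integer (so that the polynomial $P$ contributes non-trivially to the $\rho$-indicator), and when $f$ is of minimal type (so that $h_f \le 0$ everywhere), in which case one instead invokes Nevanlinna theory in angular domains or reduces to a smaller order.
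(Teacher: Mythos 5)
Your overall architecture (compare the indicators $h_f$ and $h_{f-1}$, show they agree where positive, and exploit the rigidity imposed by one-sided zero clustering) is a reasonable instinct, and the first step — that $h_f(\theta)=h_{f-1}(\theta)$ whenever this value is positive — is correct. But the step you yourself flag as the ``main obstacle'' is in fact a genuine gap, and the remedy you sketch for it does not work. The hypothesis that the zeros of $f$ eventually lie in every sector $|\arg z|<\varepsilon$ constrains only the \emph{angular} distribution of the zeros; it says nothing about the \emph{radial} distribution of their moduli. Without completely regular growth in the Levin--Pfluger sense (which amounts to something like $n(r)\sim c\,r^{\rho}$), the indicator of a canonical product with zeros confined to a single ray is merely $\rho$-trigonometrically convex; it need not be real-analytic, let alone a single sinusoid $A\cos\rho\theta+B\sin\rho\theta$. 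One can make $n(r)$ oscillate between different growth rates and destroy the ``specific $\rho$-trigonometric form'' you invoke. Your proposed fix --- passing to a weak limit of the normalised zero-counting measure so that it collapses to a point mass on the clustering ray --- only re-derives the angular concentration that is already given; it produces no information about radial regularity. So the analytic-continuation contradiction you want never gets off the ground in the irregular case. (There are secondary loose ends as well: the integer-$\rho$ case, where $e^{P}$ contributes to the indicator and the canonical product's indicator may vanish identically, and the minimal-type case, where $h_f\le 0$ everywhere, are both waved at rather than handled.)

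This is precisely the difficulty that the paper's method is built to circumvent. Instead of working with the indicator of $f$ itself, the paper rescales: it chooses a sequence $(r_k)$ of P\'olya peaks of $\log M(r)$ and forms the subharmonic functions $u_k(z)=\log|f(r_kz)|/\log M(r_k)$ and $v_k(z)=\log|f(r_kz)-1|/\log M(r_k)$, then passes to subharmonic limits $u,v$ in $\mathscr{D}'$. The P\'olya peak property forces the limit to satisfy $u(0)=0$ and $u(z)\le|z|^{\lambda}$ globally, i.e.\ it manufactures exactly the homogeneity that irregular radial growth of $f$ destroys. The angular hypothesis on the zeros then says $u$ is harmonic off a thin sector, and elementary subharmonic-function arguments (maximum principle, Phragm\'en--Lindel\"of, a Hadamard-type representation) pin $u$ down to $\operatorname{Re}(cz^{\rho})$ off the sector. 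That is the rigidity your indicator argument was reaching for, obtained in a way that is immune to irregularity. In short: your plan needs a mechanism to regularise the radial growth, and the paper's rescaling-to-a-subharmonic-limit along P\'olya peaks is that mechanism; without some substitute for it, the proposal as written does not close.
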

Here we say that a set $\{ a_n\}$ of complex numbers
accumulates in one direction if there exists a ray such that
for every open sector bisected by this ray all but finitely many $a_n$
lie in this sector.

We will prove the following generalizations of Theorem~\ref{thm-bier}.
\begin{theorem} \label{thm1}
Let $S_0$ and $S_1$ be closed sectors in $\C$ satisfying $S_0\cap S_1=\{0\}$.
Let $\theta_j$ denote the opening angle of $S_j$ and suppose
that 
\begin{equation}\label{1a0}
\min\{\theta_0,\theta_1\}<\frac{\pi}{2} \quad\text{and}\quad \max\{\theta_0,\theta_1\}<\pi.
\end{equation}
Then there is no transcendental entire function of finite order for which
all but finitely many zeros are in $S_0$ while all but finitely many $1$-points are in $S_1$.
\end{theorem}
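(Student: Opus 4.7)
The plan is by contradiction. Suppose a transcendental entire function $f$ of finite order $\rho>0$ satisfies the hypotheses. Put $u=\log|f|$ and $v=\log|f-1|$; both are subharmonic of order $\rho$ on $\mathbb{C}$, with Riesz measures $\mu_u,\mu_v$ supported, up to finitely many exceptional points, in $S_0$ and $S_1$. The pointwise comparison $\bigl||f|-|f-1|\bigr|\le 1$ yields
\[
|u(z)-v(z)|\le\log 2\quad\text{whenever}\quad\max(u(z),v(z))\ge\log 2,
\]
so in particular $T(r,f)=T(r,f-1)+O(1)$. Let $\Omega_+,\Omega_-$ denote the two open sector components of $\mathbb{C}\setminus(S_0\cup S_1)$, which are both nonempty by $\max\{\theta_0,\theta_1\}<\pi$ and $S_0\cap S_1=\{0\}$; inside each, $u$ and $v$ are harmonic off a compact set.

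Inside $\Omega_\pm$ I would show that $u(z)-v(z)=o(|z|^\rho)$ along every ray. The pointwise bound handles the large set $\{|f|\ge 2\}\cap\Omega_\pm$; a Cartan minimum-modulus argument for $f$ and $f-1$ shows that the complementary small set is contained in a union of exceptional discs of negligible total radius; and a Phragm\'en--Lindel\"of argument for subharmonic functions in the sector $\Omega_\pm$, with appropriate treatment of the finite set of zeros and $1$-points that may happen to lie on $\partial\Omega_\pm$, propagates the bound from the large set to the sector interior. Consequently the Phragm\'en--Lindel\"of indicators $h(\theta)$ of $u$ and $h_1(\theta)$ of $v$, taken at order $\rho$ (with a proximate order if the type is infinite), satisfy $h\equiv h_1$ on $\arg\Omega_+\cup\arg\Omega_-$.

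Both $h$ and $h_1$ are $\rho$-trigonometrically convex on $[0,2\pi)$. The Levin--Pfluger theory (for completely regular growth) or Azarin's weak-$*$ limit-set theory (in general) relates each Riesz measure to the distributional defect of the corresponding indicator: $h''+\rho^2 h$ is supported in $\arg S_0$, and symmetrically $h_1''+\rho^2 h_1$ in $\arg S_1$. Combining these support conditions with $h\equiv h_1$ on $\arg\Omega_\pm$ and the hypotheses $\min\{\theta_0,\theta_1\}<\pi/2$ and $\max\{\theta_0,\theta_1\}<\pi$, the trigonometric-convexity inequality applied across the whole circle should force both defect measures to vanish. A concluding argument, using Nevanlinna's second main theorem together with the classical correspondence between the defect of the indicator and the angular density of zeros, then contradicts the assumption that $f$ is transcendental of order $\rho$.

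The step I expect to be hardest is the angular-concentration passage: translating the support condition on $\mu_u$ into a support condition on $h''+\rho^2 h$. This is classical for functions of completely regular growth, but in general requires working with Azarin's weak-$*$ limit sets along subsequences of radii, and the geometric bookkeeping is delicate. The sharp role of the threshold $\pi/2$, which should make the trigonometric-convexity step strict enough to annihilate the defect measures, is the other point I anticipate needing the most care; it is also the reason why the result is not a direct consequence of the Biernacki-style accumulation result (Theorem~\ref{thm-bier}), which requires convergence in a single direction rather than only concentration in a sector.
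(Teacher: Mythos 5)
Your setup---passing to $u=\log|f|$, $v=\log|f-1|$, noting $|u-v|\le\log 2$ where one of them is large, and exploiting harmonicity off $S_0$ (resp.~$S_1$)---is in the same spirit as the paper's, which works with the blow-ups $u_k(z)=\log|f(r_kz)|/\log M(r_k)$ and $v_k$ along P\'olya peak sequences rather than directly with indicators. Those blow-ups are precisely the Azarin-type limits you gesture at, so the two frameworks are close; the paper's is more elementary in that it avoids proximate orders and the full limit-set machinery by exploiting~\eqref{A} and the regularity~\eqref{2c} that emerges from the argument itself. One more thing you omit that the paper needs: a Gauss--Lucas/Hurwitz argument showing the genus is at least~$1$, so that $\rho\ge1$ and a P\'olya peak order $\lambda>0$ actually exists.

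The genuine gap is in your endgame. You claim that trigonometric convexity plus the support conditions should force \emph{both} defect measures $h''+\rho^2h$ and $h_1''+\rho^2 h_1$ to vanish, and you then want Nevanlinna's second main theorem to finish. This is not what happens, and it would not yield a contradiction if it did. What the paper actually proves at this stage is that the \emph{glued} function
\[
w(z)=\begin{cases}u(z),&z\in\C\setminus S_0,\\ v(z),&z\in\C\setminus S_1,\end{cases}
\]
is harmonic and equal to $\re(cz^\rho)$; the Riesz measures of $u$ and of $v$ themselves do not vanish, and the angular densities of zeros and $1$-points do not vanish either. Indeed the structural conclusion is only that $f'$ has finitely many zeros, hence $f=\int_0^z p\,e^q+c$ (Theorem~\ref{thm2}). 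Such functions have $N(r,0)\asymp N(r,1)\asymp T(r,f)$, so the second main theorem gives no contradiction. The paper's proof of Theorem~\ref{thm1} instead proceeds from the explicit form $\int p\,e^q+c$: it shows $\deg q$ must be even by tracking which of the $2\deg q$ critical rays $\arg z=\phi_k\pm\pi/(2\deg q)$ attract zeros versus $1$-points, then rules out $\deg q=2$ using $\min\{\theta_0,\theta_1\}<\pi/2$ and rules out $\deg q\ge4$ using $\max\{\theta_0,\theta_1\}<\pi$. That combinatorial argument over the critical rays is the step your proposal is missing; once you reach ``the Azarin limit is $\re(cz^\rho)$'' you are at the start of that analysis, not at a contradiction.
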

\begin{theorem} \label{thm3}
Let $S$ be a closed sector in $\C$ of opening angle less than $\pi/3$ and let 
$H$ be a closed half-plane intersecting $S$ only in~$0$.
Let $f$ be a transcendental entire function of finite order.
Suppose that all but finitely many zeros of $f$ are in $S$ while all but finitely many
$1$-points are in~$H$.
Then $f$ has the form $f(z)=P(z)e^{az}$ where $P$ is a polynomial and $a\in\C$.
\end{theorem}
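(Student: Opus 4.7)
The plan is to show that $f$ has order at most $1$ and only finitely many zeros; Hadamard's factorization theorem then yields $f(z)=P(z)e^{az+b}$, and absorbing $e^b$ into $P$ gives the desired form. After a rotation we may assume $H=\{z\colon\re z\le 0\}$, so that $S$ lies in the closed right half-plane, inside a sector $\{|\arg z|\le\theta_S/2\}$ with $\theta_S<\pi/3$.

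The key geometric observation is that the two sectors $U_\pm=\{z\colon\theta_S/2<\pm\arg z<\pi/2\}$ sitting between $S$ and $H$ each have opening strictly greater than $\pi/3$, and by hypothesis $f$ has neither zeros nor $1$-points in $U_\pm$ apart from finitely many exceptions. Consequently, outside a large disc, $f$ maps $U_\pm$ into $\C\setminus\{0,1\}$. I would exploit this through a Phragmén--Lindelöf argument in $U_\pm$, or equivalently by lifting to the universal cover of $\C\setminus\{0,1\}$ and invoking Schwarz--Pick; either way yields quantitative bounds on $\log|f|$ in these sectors and, in particular, controls the indicator $h_f(\theta)=\limsup_{r\to\infty}r^{-\rho}\log|f(re^{i\theta})|$ throughout $U_\pm$, where $\rho$ denotes the order of $f$.

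Next I would combine this control with the angular concentration of the zero sets. Since $\log|f|-\log|f-1|\to 0$ wherever $|f|\to\infty$, the indicators of $f$ and $f-1$ must coincide on the large set where neither vanishes; meanwhile, the Hadamard factorizations of $f$ and of $f-1$ express these indicators in terms of a canonical product with zeros in $S$ and a canonical product with zeros in $H$, respectively. Trigonometric convexity of $h_f$, together with the sector bound in $U_\pm$, should then force both $\rho\le 1$ and the canonical product of zeros of $f$ to reduce to a polynomial.

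The main obstacle is the borderline nature of the half-plane: since $H$ has opening angle exactly $\pi$, Theorem~\ref{thm1} does not apply directly, and its sector-based argument must be replaced by a sharp Phragmén--Lindelöf estimate in $U_\pm$. The stronger hypothesis $\theta_S<\pi/3$ (rather than the analogue $<\pi/2$ appearing in Theorem~\ref{thm1}) is exactly what gives each of $U_\pm$ opening strictly greater than $\pi/3$, the numerology needed for the critical estimate to close; quantifying this delicate bound and correctly handling the finitely many exceptional $1$-points that may sit on the imaginary axis is where I expect the real technical work to lie.
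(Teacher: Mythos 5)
Your sketch has a genuine gap, and it also takes a different route from the paper's actual argument, which is much shorter because it leans on Theorem~\ref{thm2}. The paper first applies Theorem~\ref{thm2} (whose proof is the technical heart of the whole paper) to conclude that $f(z)=\int_0^z p(\zeta)e^{q(\zeta)}d\zeta+c$; once $f$ has this explicit form, the asymptotics of Section~\ref{examples} show that zeros and $1$-points can only accumulate along the critical rays $\arg z=\phi_k\pm\pi/(2d)$ with $d=\deg q$, and then the hypothesis that $S$ and $H$ together leave free a sector of opening greater than $\pi/3$ forces $\pi/d>\pi/3$, i.e.\ $d\le 2$; the case $d=2$ is ruled out because the $1$-points lie in a half-plane and hence the zeros would have to accumulate along two rays separated by $\pi/2$, which is impossible inside $S$. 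So the entire theorem reduces to elementary bookkeeping of directions once Theorem~\ref{thm2} is in hand.

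Your plan bypasses Theorem~\ref{thm2} and tries to conclude directly from indicator/Phragm\'en--Lindel\"of estimates in the intermediate sectors $U_\pm$. This does not close. The fact that $f$ omits $0$ and $1$ in a sector of opening $\beta>\pi/3$ gives, via the universal-cover/Schwarz--Pick estimate, only a growth bound $\log\log|f|\lesssim r^{\pi/\beta}$ with $\pi/\beta<3$ in that sector; this is far from forcing $\rho\le 1$. More importantly, even if you establish $\rho=1$ and a sinusoidal indicator, that still does not imply $f$ has finitely many zeros: multiply $e^{az}$ by a canonical product of order less than $1$ with zeros in $S$ and you get an order-$1$ function with sinusoidal indicator and infinitely many zeros. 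The missing step is precisely the conclusion that $f'$ has only finitely many zeros (equivalently, that $f$ has the form~\eqref{1d}), which the paper extracts by a careful rescaling-limit argument applied to the subharmonic functions $\log|f(r_kz)|/\log M(r_k)$ and $\log|f(r_kz)-1|/\log M(r_k)$, including the delicate boundary case $N\supset\C\setminus S_1$ needed because $H$ has opening exactly $\pi$. Your phrase ``should then force both $\rho\le1$ and the canonical product $\ldots$ to reduce to a polynomial'' names the destination but supplies no mechanism for reaching it; that mechanism is the real content, and it is Theorem~\ref{thm2}.
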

The following examples show that the 
hypotheses of Theorems~\ref{thm1} and~\ref{thm3} are sharp.
We will verify in Section~\ref{examples} that these examples have the stated properties.

Our first example shows that the condition $\min\{\theta_0,\theta_1\}<\pi/2$ in Theorem~\ref{thm1}
cannot be relaxed to $\min\{\theta_0,\theta_1\}\leq\pi/2$.
\begin{example} \label{exa1}
Let
\begin{equation}\label{1a}
f(z):=\frac{2}{\sqrt{\pi}}\int_0^z t^2 \exp\!\left(-t^2\right) dt +\frac12.
\end{equation}
Then all but finitely many zeros of $f$ are in $\{z\colon|\arg z|\leq \pi/4\}$ while 
all but finitely many $1$-points of $f$ are in $\{z\colon|\arg z-\pi|\leq \pi/4\}$.
\end{example}
The exponential function shows that the condition $\max\{\theta_0,\theta_1\}<\pi$ in Theorem~\ref{thm1}
cannot be relaxed to $\max\{\theta_0,\theta_1\}\leq \pi$.
This is also shown by the following example, which has infinitely many zeros and $1$-points.
This example also shows that the conclusion of  Theorem~\ref{thm3} does not hold if $S$ has
opening angle equal to~$\pi/3$.
\begin{example} \label{exa2}
Let $a$ and $b$ be defined by 
\begin{equation}\label{1b}
a\int_0^\infty t^3 \exp\!\left(-t^3\right) dt =\frac13
\quad\text{and}\quad
b\int_0^\infty t \exp\!\left(-t^3\right) dt =\frac13.
\end{equation}
Let
\begin{equation}\label{1c}
f(z):=\int_0^z (at^3 +bt) \exp\!\left(-t^3\right) dt+\frac{1}{3}.
\end{equation}
Then all but finitely many zeros of $f$ are in $\{z\colon|\arg z|\leq \pi/6\}$ while 
all but finitely many $1$-points of $f$ are in $\{z\colon\re z\leq 0\}$.
\end{example}
Theorems~\ref{thm1} and \ref{thm3} will be derived from the following result.
\begin{theorem} \label{thm2}
Let $S_0$ and $S_1$ be closed sectors in $\C$ of opening angles at most~$\pi$.
Suppose that $S_0\cap S_1=\{0\}$ and let $f$ be a transcendental entire function of
finite order for which all but finitely many zeros are in $S_0$ while all but 
finitely many $1$-points are in $S_1$. Then $f$ has the form
\begin{equation}\label{1d}
f(z)=\int_{0}^zp(\zeta)e^{q(\zeta)}d\zeta +c,
\end{equation}
where $p$ and $q$ are polynomials and $c\in\C$.
\end{theorem}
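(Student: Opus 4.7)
My strategy is to reduce the theorem to the single assertion that $f'$ has only finitely many zeros. Once this is established, Hadamard's factorization theorem applied to $f'$ — which has the same finite order as $f$ — expresses $f'(z)=p(z)e^{q(z)}$ with $p$ a polynomial gathering the finitely many zeros and $q$ an entire function of finite order, hence itself a polynomial. Integration then yields $f(z)=\int_0^z p(\zeta)e^{q(\zeta)}d\zeta+c$ with $c=f(0)$.

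To control the zeros of $f'$, the natural tool is the pair of Hadamard factorizations
\[
f(z) = P_0(z)\,e^{R_0(z)}, \qquad f(z)-1 = P_1(z)\,e^{R_1(z)},
\]
where $P_j$ is the canonical product over $f^{-1}(j)$ (so the zeros of $P_j$ lie in $S_j$ up to finitely many exceptions) and $R_j$ is a polynomial of degree at most the order $\rho$ of $f$. The two factorizations are linked by the functional equation $P_0 e^{R_0}-P_1 e^{R_1}=1$, which I will treat as a rigidity constraint. Away from finitely many exceptional points, a zero of $f'$ is either a multiple zero of $f$ (hence in $S_0$), a multiple $1$-point (hence in $S_1$), or a critical point of $f$ whose critical value is distinct from $0$ and $1$.

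The hypothesis $S_0\cap S_1=\{0\}$ with $\theta_0,\theta_1\leq\pi$ guarantees that $\Omega:=\C\setminus(S_0\cup S_1)$ contains an open sector. In $\Omega$ both $f$ and $f-1$ are zero-free outside a large disk, so $\log|f|$ and $\log|f-1|$ are harmonic there, and on every ray in $\Omega$ their difference $\log|f/(f-1)|$ records the fact that $f$ avoids both $0$ and $1$. The classical theory of indicator functions of Lindel\"of and Levin for canonical products with sector-confined zeros determines the indicators of $P_0$ and $P_1$. Combining these with Phragm\'en--Lindel\"of arguments in $\Omega$ and the functional equation should yield a rigid asymptotic structure for $f$ and $f-1$ on rays in $\Omega$, from which one concludes that no free critical points accumulate in $\Omega$ and that the ramification of $f$ at $0$ and $1$ is bounded — completing the proof that $f'$ has only finitely many zeros.

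The main obstacle is the extraction of this rigidity: using the indicator data, Phragm\'en--Lindel\"of, and the functional equation in concert to preclude infinite accumulation of critical points with critical values outside $\{0,1\}$, and to control the multiplicities of the zeros and $1$-points themselves via Nevanlinna's ramification term. The most delicate case is when one of $\theta_0,\theta_1$ equals $\pi$: then $\Omega$ reduces to a sector of opening at most $\pi$ and Phragm\'en--Lindel\"of operates at its critical order, so one expects to need proximate orders and the full subharmonic Riesz decomposition of $\log|P_j|$ rather than crude majorants.
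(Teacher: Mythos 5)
Your reduction to the claim that $f'$ has only finitely many zeros is exactly the right target, and this is also the key step in the paper's proof. However, the route you sketch toward that claim has several genuine gaps, and the part you flag as the ``main obstacle'' is in fact the entire substance of the theorem.

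First, a structural problem: you propose to show that free critical points (those with critical value outside $\{0,1\}$) do not accumulate in $\Omega=\C\setminus(S_0\cup S_1)$, and that ramification over $0$ and $1$ is bounded. Even granting both, this does not yield finitely many zeros of $f'$: a free critical point can perfectly well lie in $S_0$ or $S_1$, and nothing in your argument excludes infinitely many such points. The paper instead establishes that $f'$ has finitely many zeros in every closed subsector of $\C\setminus S_0$ and, separately, in every closed subsector of $\C\setminus S_1$; since two such subsectors can be chosen to cover $\C$, the conclusion follows. The relevant complements are the \emph{large} sets $\C\setminus S_j$, not the small set~$\Omega$. This is not a cosmetic difference: working in $\Omega$ alone cannot close the argument.

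Second, the mechanism you invoke — indicator functions of the canonical products $P_0,P_1$ combined with the equation $P_0e^{R_0}-P_1e^{R_1}=1$ and Phragm\'en--Lindel\"of — is in the spirit of Edrei's and Biernacki's classical arguments, but you do not supply the regularity hypotheses those tools require. Indicator theory gives clean conclusions for functions of completely regular growth or along sequences of P\'olya peaks; in general a function of finite order need not be regular enough for the indicator of $P_j$ to be the trigonometric function you would like. The paper confronts this head on: it first works along a sequence of P\'olya peaks to produce subharmonic limit functions $u,v$ (normalised rescalings of $\log|f|$ and $\log|f-1|$), exploits their sign sets and the Riesz decomposition to show that the limit is the single harmonic function $\re(cz^{\rho})$, and \emph{then} derives $\rho_*=\rho^*=\rho\in\N$ as a by-product. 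That equality is what lets one pass from the P\'olya-peak subsequence to arbitrary $r\to\infty$ and hence to a pointwise asymptotic for $f'/f$ in $\C\setminus S_j$. Your proposal does not explain where an analogous regularity statement would come from; simply citing Lindel\"of--Levin does not produce it.

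Third, the boundary case $\theta_1=\pi$ is where your sketch is thinnest and the paper's is most delicate. There the set where the rescaled $\log|f|$ is negative can swallow all of $\C\setminus S_1$, and the paper needs two additional, rather specific tools: a Phragm\'en--Lindel\"of lemma showing $\rho=1$ when a finite-order subharmonic function vanishing at the origin is negative in a half-plane (Lemma~\ref{lemma-hp}), and a representation-theoretic lemma (Lemma~\ref{lemma-unb-subh}) whose proof is a nontrivial computation with the Riesz measure and an integral identity evaluated by residues. These force $u(z)=-\re z$ and ultimately, via a length/argument-principle estimate along the level curve $|f|=1$, that $f$ is zero-free, hence $f=e^{az+b}$. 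Nothing in your proposal addresses how a functional-equation-plus-indicator argument would reach the same conclusion at this critical order; ``proximate orders and the full subharmonic Riesz decomposition'' is a plausible direction but, as written, not an argument.

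To be fair to you: the decomposition $f=P_0e^{R_0}$, $f-1=P_1e^{R_1}$ together with the constraint $P_0e^{R_0}-P_1e^{R_1}=1$ is indeed a rigidity device, and some version of your programme may well be executable. But as it stands, the proposal identifies the end state and a plausible toolbox without bridging the gap between them, and it focuses on the wrong region ($\Omega$ rather than $\C\setminus S_0$ and $\C\setminus S_1$). The paper's rescaling argument, which produces a single global harmonic limit $\re(cz^\rho)$ for both $u$ and $v$, is what actually delivers the uniform control on $f'/f$ that rules out infinitely many critical points everywhere in $\C$, and no substitute for it appears in your sketch.
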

The conclusion of Theorem~\ref{thm2} does not hold if one of the sectors $S_0$ and $S_1$ has 
opening angle greater than~$\pi$. Similarly, the half-plane in Theorem~\ref{thm3}
cannot be replaced by a sector of opening angle greater than~$\pi$.

For example, if $0<\rho<1$ and $(a_n)$ is a sequence of positive real numbers satisfying
$a_n\sim n^{1/\rho}$ as $n\to\infty$, then
\begin{equation}\label{1e}
f(z):=\prod_{n=1}^\infty \left( 1-\frac{z}{a_n}\right) 
\end{equation}
is an entire function with positive zeros for which the $1$-points accumulate at the 
rays $\arg z=\pm\pi(1-1/2\rho)$; see, e.g., \cite[Section~2.5]{Goldberg2008} for the asymptotics
of canonical products with positive zeros.

The restriction on the order is essential in all stated theorems.
In fact, for any two distinct directions there exists an entire function
$f$ of infinite order such that the zeros
of $f$ accumulate in one of these directions while the $1$-points 
accumulate in the other direction; see, e.g., \cite[Theorem~5]{BEH}.

On the other hand, a classical result of Edrei~\cite{Edrei1955} says that if the zeros and
$1$-points of an entire function $f$ lie on finitely many rays, then $f$ has finite order.

\section{Examples}\label{examples}
We consider functions $f$ of the form~\eqref{1d}.  Let 
$d:=\deg(q)$ and let $A$ be the leading coefficient of $q$ so that
$q(z)\sim Az^{d}$ as $z\to\infty$. For $k\in\{1,2,\dots,d\}$ we put 
\begin{equation}\label{3a}
\phi_k:=\frac{(2k-1)\pi -\arg A}{d} .
\end{equation}
It is easy to see that the limits
\begin{equation}\label{3b}
a_k:=\lim_{r\to\infty} f(re^{i\phi_k})
\end{equation}
exist. 
For $\varepsilon>0$ we then have, as $|z|\to\infty$, 
\begin{equation}\label{3c}
f(z)\to a_k
\quad\text{for}\
\phi_k-\frac{\pi}{2d}+\varepsilon \leq \arg z \leq \phi_k+\frac{\pi}{2d}-\varepsilon ,
\end{equation}
while
\begin{equation}\label{3d}
|f(z)|\to \infty
\quad\text{for}\
\phi_k+\frac{\pi}{2d}+\varepsilon \leq \arg z \leq \phi_{k+1}-\frac{\pi}{2d}-\varepsilon.
\end{equation}
Here we have put $\phi_{d+1}=\phi_1+2\pi$.

The following quantitative form of~\eqref{3c} and~\eqref{3d} can be proved using integration by 
parts; see~\cite[Lemma~4.1]{Hemke2005}.
\begin{lemma} \label{asymp-f}
Let 
\begin{equation}\label{3e}
\phi_k-\frac{\pi}{d} \leq \arg z \leq \phi_k+\frac{\pi}{d}.
\end{equation}
Then
\begin{equation}\label{3f}
f(z)=a_k+\frac{p(z)}{q'(z)}e^{q(z)}\left( 1+\O\!\left(\frac{1}{|z|}\right)\right)
\end{equation}
as $|z|\to\infty$.
\end{lemma}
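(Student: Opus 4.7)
The approach is one integration by parts. The choice of $\phi_k$ in~\eqref{3a} gives $\re q(\zeta) \sim -|A||\zeta|^d$ as $\zeta\to\infty$ along $\arg\zeta=\phi_k$, so the improper integral $\int_0^{\infty e^{i\phi_k}} p(\zeta)\,e^{q(\zeta)}\,d\zeta$ converges absolutely, and by~\eqref{3b} we get
\begin{equation*}
f(z) \;=\; a_k + \int_{\infty e^{i\phi_k}}^z p(\zeta)\,e^{q(\zeta)}\,d\zeta .
\end{equation*}
Writing $p\,e^q\,d\zeta = (p/q')\,d(e^q)$ and integrating by parts, with the boundary term at infinity vanishing, one obtains
\begin{equation*}
f(z) \;=\; a_k + \frac{p(z)}{q'(z)}\,e^{q(z)} \;-\; \int_{\infty e^{i\phi_k}}^z \left(\frac{p}{q'}\right)'\!(\zeta)\,e^{q(\zeta)}\,d\zeta.
\end{equation*}

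What remains is to show that the last integral is $O(|z|^{-1})$ times $(p(z)/q'(z))\,e^{q(z)}$. The elementary fact $(p/q')'(\zeta) = O(|\zeta|^{-1})\,(p/q')(\zeta)$ as $|\zeta|\to\infty$ supplies the desired factor $1/|z|$; the issue is to control $|e^{q(\zeta)}|$ along the path. I would deform the path so that it follows a steepest-descent contour of $\re q$ through $z$, i.e.\ a level curve of $\im q$, going out to infinity in a direction where $\re q\to-\infty$. Along such a contour $|e^{q(\zeta)}|$ is monotone, and after the substitution $s=\re q(\zeta)$ together with $|q'(\zeta)|\,|d\zeta|=|ds|$, the estimate reduces to a one-dimensional Laplace-type integral dominated by its endpoint at $s=\re q(z)$, yielding the desired bound.

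The main obstacle is making this estimate uniform for $z$ throughout the closed sector $|\arg z-\phi_k|\leq\pi/d$. When $|\arg z-\phi_k|<\pi/(2d)$ both the main term $(p/q')(z)e^{q(z)}$ and the remainder are exponentially small, so sharp tracking of the steepest-descent contour matters; when $|\arg z-\phi_k|>\pi/(2d)$ the main term grows exponentially and the estimate is comparatively easy. The two regimes must be matched smoothly across the Stokes rays $\arg z=\phi_k\pm\pi/(2d)$ where $\re q(z)=0$. A minor technical point is that the steepest-descent contour from $z$ must avoid the finitely many zeros of $q'$; this is unproblematic for $|z|$ sufficiently large, since those critical points lie in a bounded set.
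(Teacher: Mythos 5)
The paper does not prove this lemma at all: it merely notes that the result ``can be proved using integration by parts'' and cites \cite[Lemma~4.1]{Hemke2005} for the details. Your proposal follows precisely that indicated route---one integration by parts, followed by an estimate of the remainder---so you are on the same track as the authors intend, not taking a genuinely different one.

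The sketch is sound in outline. The reduction $f(z)=a_k+\frac{p}{q'}e^q-\int_{\infty e^{i\phi_k}}^z(p/q')'e^q\,d\zeta$ is correct, the boundary term vanishes because $\re q(re^{i\phi_k})\sim -|A|r^d\to-\infty$, and the Laplace-type bound along a steepest-descent path does give what is needed: after the substitution $s=\re q(\zeta)$ the endpoint contribution is $\bigl|(p/q')'(z)/q'(z)\bigr|\,e^{\re q(z)}$, and since $(p/q')'/(p/q')=\O(1/|z|)$ and $1/|q'(z)|=\O(1/|z|^{d-1})$, this is in fact $\O(|z|^{-d})$ times the main term, better than the claimed $\O(1/|z|)$. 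You have correctly identified where the real work lies: making the Laplace estimate uniform over the full closed sector, matching the two regimes across the Stokes rays $\arg z=\phi_k\pm\pi/(2d)$, justifying the contour deformation from the ray $\arg\zeta=\phi_k$ to the steepest-descent path, and confirming that this path exits to infinity in the valley $\phi_k$ while avoiding the finitely many zeros of $q'$. None of these steps is carried out in your sketch, so as it stands it is a correct plan rather than a proof, but I see no obstruction to completing it along the lines you describe.
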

It is well-known -- and follows easily from Lemma~\ref{asymp-f} -- that 
for any $a\in \C\setminus\{a_k\}$ and $\varepsilon\in (0,\pi/d)$ each of the sectors
$\{z\colon |\arg z-\phi_k\pm \pi/(2d)|<\varepsilon\}$ contains infinitely many $a$-points,
but only finitely many $a_k$-points.

Thus for any $a\in\C$ the $a$-points of $f$
 can accumulate only at the rays given by $\arg z=\phi_k\pm \pi/(2d)$,
and they do accumulate at the rays $\arg z=\phi_k\pm \pi/(2d)$ for $a\in \C\setminus\{a_k\}$, 
but not for $a=a_k$.
\begin{proof}[Verification of Example~\ref{exa1}]
Since
\begin{equation}\label{3g}
\int_0^\infty t^2\exp(-t^2) dt=\frac14\sqrt{\pi}
\end{equation}
we have~\eqref{3b} with $\phi_1=0$, $\phi_2=\pi$, $a_1=1$ and $a_2=0$.
Thus the zeros are asymptotic to the rays $\arg z=\pm \pi/4$ while the $1$-points are 
asymptotic to the rays $\arg z=\pm 3\pi/4$. Lemma~\ref{asymp-f} yields that
$|f(z)|\to\infty$ as $|z|\to\infty$ if $\pi/4\leq \arg z\leq 3\pi/4$ or
$-3\pi/4\leq \arg z\leq -\pi/4$. This implies that all but finitely many zeros 
are contained in $\{z\colon |\arg z|<\pi/4\}$ and all but finitely many $1$-points
are contained in $\{z\colon |\arg z-\pi|<\pi/4\}$.
\end{proof}
\begin{proof}[Verification of Example~\ref{exa2}]
Here we have $d=3$ and~\eqref{3b} holds with $\phi_1=0$, $\phi_2=2\pi/3$, $\phi_3=4\pi/3$,
$a_1=1$ and $a_2=a_3=0$.
Thus the zeros are asymptotic to the rays $\arg z=\pm \pi/6$ while the $1$-points are 
asymptotic to the rays $\arg z=\pm \pi/2$ and $\arg z=\pm 5\pi/6$.
The rest of the argument follows as in Example~\ref{exa1} from Lemma~\ref{asymp-f}.
\end{proof}

\section{Preliminary results}\label{lemmas}
We shall need some results about subharmonic functions; see \cite{Hayman1976},
\cite[Chapter III]{Hoermander1990}, \cite[Chapter III]{Hoermander1994} and~\cite{Ransford1995}
for basic results used in the following.

The following result is implicit in~\cite{BEH}. For completeness, we include a proof.
\begin{lemma} \label{lemma-beh}
Let $u$ be subharmonic function in a neighborhood of $0$ and $u(0)=0$.
Let $\alpha\in (0,\pi]$ and suppose that $u(z)<0$ for $|\arg z|<\alpha$.
Then $\alpha\leq\pi/2$, and there exists $c>0$ and $r_0>0$ such that
\begin{equation}\label{6a0}
\int_{-\alpha}^\alpha u(re^{it}) dt \leq -cr^{\pi/(2\alpha)}
\quad\text{for}\ r\in(0,r_0).
\end{equation}
\end{lemma}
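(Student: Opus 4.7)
The plan is to establish the integral estimate via Green's identity with a carefully chosen harmonic test function on a shrinking truncated sector, and to deduce $\alpha \leq \pi/2$ by combining this estimate with a mean-value argument on the complementary sector.

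Let $\beta := \pi/(2\alpha)$ and consider $\psi(z) = r^\beta \cos(\beta\theta)$ in polar coordinates $z = re^{i\theta}$. This function is harmonic on $S := \{|\arg z| < \alpha\}$, vanishes on the rays $\arg z = \pm\alpha$, and is non-negative throughout $S$. After mollifying $u$ and working on a slightly smaller sector $\{|\arg z| < \alpha - \delta\}$ whose rays lie in the open region where $u < 0$, one applies Green's identity on the truncated annular sector $\{\epsilon < |z| < R\} \cap \{|\arg z| < \alpha - \delta\}$:
\[
\int \psi \, d\mu \;=\; \oint_{\partial} \bigl(\psi\,\partial_n u - u\,\partial_n\psi\bigr)\, ds,
\]
where $\mu = \Delta u \geq 0$ is the Riesz measure of $u$. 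The left-hand side is non-negative. On the rays $\psi = 0$, so the integrand reduces to $-u\,\partial_n\psi$; direct computation gives $\partial_n\psi = -\beta\, r^{\beta-1} < 0$, and together with $u<0$ this ray contribution is non-positive.

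Setting $I(r) := \int_{-(\alpha-\delta)}^{\alpha-\delta} u(re^{it}) \cos(\beta t)\,dt$ and $W(r) := r^{-\beta} I(r)$, a short computation shows the arc contributions combine into $R^{2\beta+1} W'(R) - \epsilon^{2\beta+1} W'(\epsilon)$. Hence $r \mapsto r^{2\beta+1} W'(r)$ is non-decreasing. If $W'(r_0) < 0$ for some $r_0 > 0$, the resulting lower bound $W'(r) \leq (r_0/r)^{2\beta+1} W'(r_0)$ on $(0,r_0]$ is non-integrable at $0$, forcing $W(\epsilon) \to +\infty$ as $\epsilon \to 0^+$; but $u < 0$ and $\cos(\beta t) \geq 0$ yield $I \leq 0$ and hence $W \leq 0$, a contradiction. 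Thus $W' \geq 0$; since $I(r_0) < 0$ (the arc where $u<0$ has positive measure and $\cos(\beta t) > 0$ there), one has $W(r_0) = -c < 0$, and monotonicity gives $I(r) \leq -c r^\beta$ on $(0,r_0]$. Using $\cos(\beta t) \leq 1$ and $u \leq 0$ one concludes $\int u(re^{it})\,dt \leq I(r) \leq -c r^\beta$, and sending $\delta \to 0$ yields the stated estimate.

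For the claim $\alpha \leq \pi/2$: suppose for contradiction $\alpha > \pi/2$, so the exponent $\pi/(2\alpha)$ is strictly less than $1$. The integral estimate, combined with the sub-mean-value inequality $0 = u(0) \leq \frac{1}{2\pi}\int_0^{2\pi} u(re^{it})\,dt$, forces $\int_\alpha^{2\pi - \alpha} u(re^{it})\,dt \geq c\, r^{\pi/(2\alpha)}$; together with $\sup_{|z|\leq r} u \to 0$ from upper semi-continuity at $0$, this produces $\sup_{|z|\leq r} u \geq c''\, r^{\pi/(2\alpha)}$ with exponent $<1$. The expected contradiction comes from re-running the integration-by-parts machinery on the complementary sector (of opening $2(\pi - \alpha) < \pi$, with test-function exponent $\pi/(2(\pi - \alpha)) > 1$), where the matching upper bound on $u$ is inconsistent with this sub-linear lower bound. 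The main obstacle in this step is that the sign of $u$ on the boundary rays from the complementary side is not a priori controlled, so this second application requires a separate mollification-and-shrinking procedure analogous to that of the first part.
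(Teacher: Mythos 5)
Your Green's-identity argument for the integral estimate~\eqref{6a0} is a genuinely different route from the paper, which instead introduces a harmonic majorant of $u$ in the truncated sector that vanishes on the rays, maps it conformally by $z\mapsto z^{2\alpha/\pi}$ to a half-disc, extends by the reflection principle, and Taylor-expands at~$0$. Your version avoids the conformal map and produces the estimate from the monotonicity of $r\mapsto r^{2\beta+1}W'(r)$, which is an attractive and essentially sound alternative. One inconsistency should be fixed: with $\beta=\pi/(2\alpha)$ held fixed, the function $\psi(z)=r^\beta\cos(\beta\theta)$ does \emph{not} vanish on the shrunken rays $\arg z=\pm(\alpha-\delta)$, so the boundary integrand there is not simply $-u\,\partial_n\psi$. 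You must instead take $\psi_\delta(z)=r^{\beta_\delta}\cos(\beta_\delta\theta)$ with $\beta_\delta=\pi/(2(\alpha-\delta))$, run the whole computation with exponent $\beta_\delta$, and only at the end let $\beta_\delta\downarrow\beta$; then the constant $c_\delta=-W_\delta(r_0)$ converges to a positive limit and the asserted bound follows.

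The claim $\alpha\le\pi/2$, however, is not actually proved, and the gap is substantive rather than technical. Via the sub-mean-value inequality at $0$ you correctly reduce it to obtaining an upper bound $\int_{\alpha}^{2\pi-\alpha}u(re^{it})\,dt=O\bigl(r^{\pi/(2(\pi-\alpha))}\bigr)$, which for $\alpha>\pi/2$ would beat the lower bound $cr^{\pi/(2\alpha)}$ forced by~\eqref{6a0}. But your proposal to get this by ``re-running the integration-by-parts machinery'' on the complementary sector cannot work as described, and you have put your finger on exactly why: the sign of $u$ on the boundary rays seen from the complementary side is not controlled. That sign control was not a nuisance in the first part, it was the engine: the ray contribution $-u\,\partial_n\psi_\delta\le 0$ used $u<0$ on $\arg z=\pm(\alpha-\delta)$ in an essential way. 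In the complementary sector $u$ may be large and positive near the rays, the ray term in Green's identity then has uncontrolled sign, and the monotonicity of $r^{2\gamma+1}W'(r)$ is lost. Shrinking the complementary sector does not help (its rays move \emph{away} from the region where $u<0$), and mollification cannot manufacture the missing sign. The paper sidesteps all of this by replacing $u$ by a harmonic majorant $h_1\ge u$ on the truncated complementary sector whose ray boundary values are $0$ \emph{by construction}; for $h_1$ the conformal-map/reflection/Taylor argument applies with no sign hypothesis on $u$ at all, giving $h_1(z)=O(|z|^{\pi/(2(\pi-\alpha))})$ and hence the needed one-sided bound on the arc integral. If you wish to stay with Green's identity, you would likewise have to pass to a majorant with vanishing ray traces first, which is in effect the same device.
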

\begin{proof}
Let $r_1>0$ be such that the closed disk of radius $r_1$ around $0$ is in the domain of~$u$.
Then there exists a harmonic majorant  $h$ of $u$ in the (truncated) sector
$S_\alpha:=\{ z\colon |\arg z|<\alpha,|z|<r_1\}$ such that $h(re^{\pm i\alpha})=0$
for $r\in(0,r_1)$.
Let $r_2:=r_1^{\pi/(2\alpha)}$ and $S_{\pi/2}:=\{z\colon \re z> 0, |z|<r_2\}$.
Then the function $v$ defined by $v(z)=h(z^{2\alpha/\pi})$, where the principal branch of the root
is used, is negative and harmonic in $S_{\pi/2}$, and zero on the vertical part of the boundary.
By the reflection principle $v$ extends to
a harmonic function in the disc of radius $r_2$ around $0$, and $\nabla v(0)\neq 0$.
Therefore $v(z)=-c_0\re z+\O(z^2)$ near zero for some $c_0>0$. Thus
\begin{equation}\label{6a0a}
\int_{-\pi/2}^{\pi/2} v(re^{it}) dt \leq -\frac12 c_0 r
\end{equation}
if $r$ is small enough.
This implies that \eqref{6a0} is satisfied with $h$ instead of~$u$.
Since $u<h$ in $S_\alpha$, we deduce that $u$ also satisfies \eqref{6a0}.

To show that $\alpha\leq \pi/2$,
let $S_\alpha'=\{z\colon \alpha<\arg z<2\pi-\alpha, |z|<r_1\}$ be the 
complement of $\overline{S_\alpha}$ in the disk of radius $r_1$ and 
let $h_1$ be a harmonic majorant of $u$ in 
$S_\alpha'$ which satisfies $h_1(re^{\pm i\alpha})=0$ for $r\in (0,r_1)$.
The same argument as before
shows that $h_1(z)=\O(z^{\pi/(2\beta)})$ as $z\to 0$, where $\beta:=\pi-\alpha$.
Thus there exists a constant $c_1>0$ such that
\begin{equation}\label{6a1}
\int_{\alpha}^{2\pi-\alpha} u(re^{it})dt\leq c_1r^{\pi/(2\beta)}
\quad\text{for}\ r\in(0,r_0).
\end{equation}
Together with~\eqref{6a0} this yields that
\begin{equation}\label{6a2}
0=u(0)\leq\int_{\alpha}^{2\pi+\alpha} u(re^{it})dt\leq c_1r^{\pi/(2\beta)}-cr^{\pi/(2\alpha)}
\quad\text{for}\ r\in(0,r_0).
\end{equation}
We conclude that $\beta\geq\alpha$ and thus $\alpha\leq\pi/2.$
\end{proof}
In the case that $\alpha=\pi/2$ we will use the following result.
\begin{lemma} \label{lemma-hp}
Let $u$ be subharmonic in $\C$. Suppose that $u(0)=0$ and that there exist
$\rho>0$ and $K>0$ such that $u(z)\leq K|z|^\rho$ for all $z\in\C$.
If $u$ is negative in some half-plane, then $\rho=1$.
\end{lemma}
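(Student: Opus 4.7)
The strategy is to show $\rho\leq 1$ from the local behavior of $u$ at the origin and $\rho\geq 1$ from the global growth at infinity; combining these forces $\rho=1$. After a rotation I may assume that $u<0$ on the open right half-plane $H=\{\re z>0\}$.

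\emph{Upper bound $\rho\leq 1$.} First I would apply Lemma~\ref{lemma-beh} with $\alpha=\pi/2$, the boundary value permitted by that lemma, to obtain constants $c,r_0>0$ with
\[
\int_{-\pi/2}^{\pi/2} u(re^{it})\,dt \leq -cr \quad\text{for } 0<r<r_0.
\]
The global upper bound $u(z)\leq K|z|^{\rho}$ yields $\int_{\pi/2}^{3\pi/2} u(re^{it})\,dt\leq \pi K r^{\rho}$, and the sub-mean value inequality gives $\int_{-\pi}^{\pi} u(re^{it})\,dt\geq 2\pi u(0)=0$. Adding these three estimates yields $0\leq -cr+\pi K r^{\rho}$ for all small $r>0$; dividing by $r$ and letting $r\to 0^+$ rules out $\rho>1$, so $\rho\leq 1$.

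\emph{Lower bound $\rho\geq 1$.} I would argue by contradiction. Suppose $\rho<1$. Fix any $\varepsilon>0$ and set $G=\{\re z<\varepsilon\}$. Its boundary $\{\re z=\varepsilon\}$ lies inside $H$, so $u\leq 0$ there. On $G$, the function $u$ is subharmonic and satisfies $u(z)\leq K|z|^{\rho}$ with $\rho<1$, so the classical Phragm\'en--Lindel\"of principle for a half-plane gives $u\leq 0$ on $G$. Combined with $u<0$ on the complementary half-plane $\{\re z\geq\varepsilon\}\subset H$, we obtain $u\leq 0$ on $\C$. Together with $u(0)=0$, the maximum principle for subharmonic functions forces $u\equiv 0$, contradicting $u<0$ on $H$. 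Hence $\rho\geq 1$, and combining with $\rho\leq 1$ we conclude $\rho=1$.

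The main obstacle is recognizing that $\rho=1$ is a sharp two-sided constraint whose two inequalities come from different scales: the upper bound is local at the origin (the critical case $\alpha=\pi/2$ of Lemma~\ref{lemma-beh}), while the lower bound is global at infinity (the critical case of Phragm\'en--Lindel\"of in a half-plane). The shift by $\varepsilon$ in the second step is a minor technicality that sidesteps delicate boundary considerations on the imaginary axis, where the hypothesis provides no uniform bound on $u$.
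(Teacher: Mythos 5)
Your proof is correct and follows essentially the same route as the paper: the upper bound $\rho\le1$ comes from combining Lemma~\ref{lemma-beh} at the critical opening $\alpha=\pi/2$ with the sub-mean-value inequality at the origin, and the lower bound $\rho\ge1$ comes from a Phragm\'en--Lindel\"of argument in a half-plane. The only difference is cosmetic: the paper applies Lemma~\ref{lemma-pl} directly to the half-plane where $u$ may be positive, obtains $u(z)\le L\re z$ with $L=\limsup_{x\to\infty}u(x)/x$, and observes that $L>0$ (since a non-constant subharmonic function on $\C$ is unbounded) forces $\rho\ge1$; you instead run the contrapositive, assuming $\rho<1$ and using Phragm\'en--Lindel\"of (with your $\varepsilon$-shift of the boundary, which neatly avoids any discussion of boundary regularity on the imaginary axis) to conclude $u\le0$ everywhere, hence $u\equiv0$ by the maximum principle, a contradiction. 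Both routes are valid and rely on the same two ingredients.
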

In the proof of Lemma~\ref{lemma-hp}
we will use the following version of the Phragm\'en-Lindel\"of theorem~\cite[Corollary~2.3.8]{Ransford1995}.
\begin{lemma} \label{lemma-pl}
Let $u$ be subharmonic in the right half-plane $H:=\{z\colon \re z>0\}$. Suppose that there exist
constants $A,B\in\R$ such that 
\begin{equation}\label{6a}
u(z)\leq A+B|z| \quad\text{for}\ z\in H
\end{equation}
and
\begin{equation}\label{6b}
\limsup_{\zeta\to z}u(\zeta)\leq 0
\quad\text{for}\ z\in \partial H.
\end{equation}
Put
\begin{equation}\label{6c}
L:=\limsup_{x\to \infty}\frac{u(x)}{x}.
\end{equation}
Then
\begin{equation}\label{6d}
u(z)\leq L\re z \quad\text{for}\ z\in H.
\end{equation}
\end{lemma}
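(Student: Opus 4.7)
The plan is to absorb the candidate linear term and reduce to showing that the subharmonic function
\[
v(z):=u(z)-L\re z
\]
is $\le 0$ throughout $H$. Taking $z=x>0$ in~\eqref{6a} gives $u(x)/x\le A/x+B$, hence $L\le B<\infty$; I may assume $L$ is finite, since $L=-\infty$ forces $u\equiv-\infty$ on $H$ by subharmonicity, and there is nothing to prove. Then $v$ is subharmonic in $H$, satisfies $\limsup_{\zeta\to z}v(\zeta)\le 0$ on $\partial H$, obeys the linear bound $v(z)\le A+(B+|L|)|z|$, and by construction has $\limsup_{x\to\infty}v(x)/x=0$. Establishing $v\le 0$ in $H$ is equivalent to the desired conclusion~\eqref{6d}.

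To show $v\le 0$, I would work in the two closed quadrants $Q_\pm:=\{z\in H\colon\pm\im z\ge 0\}$ and, by complex-conjugation symmetry, focus on $Q_+$. Fix $\varepsilon>0$ and consider $w_\varepsilon(z):=v(z)-\varepsilon\re z$, which is subharmonic of order at most $1$ in $Q_+$, a sector of opening $\pi/2$ with critical order $2$. On $\partial Q_+$ one has $\limsup w_\varepsilon\le 0$ along the imaginary axis, while the definition of $L$ gives $w_\varepsilon(x)\le-\varepsilon x/2$ for $x\ge R_\varepsilon$ with $w_\varepsilon\le M_\varepsilon$ on $[0,R_\varepsilon]$.

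A direct application of Phragmén-Lindelöf in the quadrant yields only $w_\varepsilon\le\max(M_\varepsilon,0)$, which is not sharp. My plan for removing this additive constant is to exhaust $Q_+$ by $Q_R:=Q_+\cap\{|z|<R\}$ and bound $w_\varepsilon$ at a fixed interior point $z_0$ by the boundary integral against the harmonic measure of $Q_R$. Via the conformal map $z\mapsto z^2$ sending $Q_R$ to the upper half-disk of radius $R^2$, one checks that the harmonic measure of the outer arc from $z_0^2$ decays like $R^{-2}$, so its product with the linear-growth bound of $w_\varepsilon$ on the arc vanishes as $R\to\infty$. The contribution of the positive real axis then splits into a positive piece coming from $[0,R_\varepsilon]$, weighted by $M_\varepsilon$, and a negative piece from $[R_\varepsilon,\infty)$, weighted by $-\varepsilon x/2$; the sharp fact is that the latter dominates the former, giving $w_\varepsilon(z_0)\le 0$.

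The hardest step is this quantitative balance between the two real-axis harmonic-measure contributions: the sharpness of the linear growth in $H$ and the sharpness of the assumption $\limsup v(x)/x=0$ are both used essentially, since linear growth is precisely the borderline at which the arc contribution neither explodes nor vanishes faster than necessary. Once this balance is established, sending $\varepsilon\to 0$ yields $v(z_0)\le 0$ for every $z_0\in Q_+$, and the symmetric argument on $Q_-$ completes the proof of~\eqref{6d}.
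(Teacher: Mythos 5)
The paper does not prove this lemma; it simply cites Ransford, Corollary~2.3.8. So I am assessing your argument on its own merits against the standard proof of this Phragm\'en--Lindel\"of/Lindel\"of principle.

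Your high-level plan is the standard one: subtract $(L+\varepsilon)\re z$, exploit the quadrants, and send $\varepsilon\to 0$. You also correctly identify the difficulty -- a naive PL application in a quadrant only gives $w_\varepsilon \le \max(M_\varepsilon,0)$ -- and you correctly compute that the outer-arc harmonic measure in $Q_R$ decays like $R^{-2}$, killing the arc contribution. But the step you flag as ``the sharp fact,'' namely that the negative tail contribution from $[R_\varepsilon,\infty)$ dominates the positive contribution $M_\varepsilon\,\omega(z_0,[0,R_\varepsilon],Q_+)$, is not true and cannot be established at this point of the argument. A priori you only know $v(x)=o(x)$ on the positive axis, so $M_\varepsilon=\sup_{[0,R_\varepsilon]}(v(x)-\varepsilon x)$ can be large and even tend to $+\infty$ as $\varepsilon\to 0$, while $\omega(z_0,[0,R_\varepsilon],Q_+)$ tends to the fixed positive number $\omega(z_0,\mathbb{R}_+,Q_+)$. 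On the other hand the tail integral $\tfrac{\varepsilon}{2}\int_{R_\varepsilon}^\infty x\,d\omega(z_0,x)$ is of size $O(\varepsilon/R_\varepsilon)$ (since $d\omega\asymp x^{-3}dx$), hence tiny. For instance, if one only knows $v(x)\le x^{1/2}$ on $\mathbb{R}_+$, then $R_\varepsilon\asymp \varepsilon^{-2}$, $M_\varepsilon\asymp \varepsilon^{-1}$, and the tail term is $O(\varepsilon^3)$ -- the balance fails by several orders. To make the balance go through you would need to already know $v\le 0$ on $[0,R_\varepsilon]$, which is essentially the conclusion.

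The missing idea is a second, separate PL step. After the quadrant estimate you do have, for each $\varepsilon>0$, that $w_\varepsilon:=u-(L+\varepsilon)\re z$ is bounded above throughout $H$ (by some constant $C_\varepsilon$). Now view $w_\varepsilon$ as a bounded-above subharmonic function on the \emph{half-plane} $H$ with $\limsup w_\varepsilon\le 0$ on $\partial H$. Boundedness is strictly below the critical order $1$ of the half-plane, so the sub-critical Phragm\'en--Lindel\"of theorem (or a half-disk harmonic-measure estimate, where the arc now contributes $O(C_\varepsilon/R)\to 0$) yields $w_\varepsilon\le 0$ in $H$, with no leftover constant. Letting $\varepsilon\to 0$ gives~\eqref{6d}. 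Your single harmonic-measure pass in the quadrant cannot be made to replace this two-stage argument, because the quadrant's straight boundary piece $[0,R_\varepsilon]$ is interior to $H$, where no sign information is available until after the first stage. Finally, your reduction to finite $L$ is slightly off as stated: $L=-\infty$ does not immediately force $u\equiv-\infty$ ``by subharmonicity''; rather, one proves the estimate $u\le L'\re z$ for every finite $L'>L$ and then concludes $u\equiv-\infty$ in the case $L=-\infty$.
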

\begin{proof}[Proof of Lemma~\ref{lemma-hp}]
Without loss of generality we may assume that $u(z)<0$ for $\re z<0$.
Lemma~\ref{lemma-beh} and the hypotheses that $u(0)=0$ and $u(z)\leq K|z|^\rho$ yield that 
if $r$ is sufficiently small, then
\begin{equation}\label{6d1}
0=2\pi u(0)\leq \int_{-\pi/2}^{3\pi/2} u(re^{it}) dt
=\int_{-\pi/2}^{\pi/2} u(re^{it}) dt +\int_{\pi/2}^{3\pi/2} u(re^{it}) dt
\leq K\pi r^\rho -cr .
\end{equation}
This yields that $\rho\leq 1$. Hence~\eqref{6a} holds with $A=B=K$.

Lemma~\ref{lemma-pl} yields that~\eqref{6d} holds 
with $L$ given by~\eqref{6c}.
Since $u$ is non-constant and hence unbounded we have $L>0$.  Thus $\rho=1$.
\end{proof}

Let  $u$ be subharmonic in $\C$ and let $\mu$ be the Riesz measure of~$u$.
For $r>0$ we put 
\begin{equation}\label{6j}
B(r,u):=\max_{|z|=r} u(z)
\quad\text{and}\quad
n(r):=\mu(\{z\colon |z|\leq r\}.
\end{equation}
Jensen's formula (see~\cite[Section~7.2]{Levin1996} or \cite[Section 3.9]{Hayman1976}) yields that 
\begin{equation}\label{6k}
N(r):=\int_1^r \frac{n(t)}{t} dt\leq B(r,u) +\O(1).
\end{equation}
The order $\rho$ of $u$ is defined by
\begin{equation}\label{6l}
\rho:=\limsup_{r\to\infty} \frac{\log B(r,u)}{\log r} .
\end{equation}
Note that if $u=\log|f|$ for some entire function $f$, 
and if $M(r)$ denotes the maximum modulus of~$f$, then we have $B(r,u)=\log M(r)$.
Thus the order of the subharmonic function $u$ coincides with that of the entire function~$f$.

If $u$ has finite order, then there exists a non-negative integer $q$
satisfying $q\leq \rho\leq q+1$ such that
\begin{equation}\label{6m}
\int_1^\infty \frac{B(r,u)}{r^{q+2}} dr <\infty.
\end{equation}
Using~\eqref{6k} we see that then also
\begin{equation}\label{6n}
\int_1^\infty \frac{N(r)}{r^{q+2}} dr <\infty.
\end{equation}
Moreover, integration by parts shows that the latter condition is equivalent to
\begin{equation}\label{7a}
\int_1^\infty \frac{n(r)}{r^{q+2}} dr <\infty.
\end{equation}
The following result~\cite[Theorem~4.2]{Hayman1976} is the subharmonic version of 
the Hadamard factorization theorem.
\begin{lemma} \label{lemma-hada-subh}
Let $u$ be a subharmonic of finite order $\rho$ with Riesz measure $\mu$.
Let $q$ be the minimal integer such that~\eqref{7a} holds and let $R>0$.
Then $u$ can be written in the form
\begin{equation}\label{7c0}
u(z)=v(z)+w(z)+h(z)
\end{equation}
where
\begin{equation}\label{7c}
v(z) =\int_{|\zeta|<R}\log|z-\zeta|d\mu(\zeta),
\end{equation}
\begin{equation}\label{7d}
w(z) = \int_{|\zeta|\geq  R}\left(\log\left|1-\frac{z}{\zeta}\right|
+\sum_{j=1}^q\frac{1}{j}\re\!\left(\frac{z}{\zeta}\right)\right)d\mu(\zeta)
\end{equation}
and $h$ is a harmonic polynomial of degree at most~$\rho$.
\end{lemma}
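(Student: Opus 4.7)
The plan is to follow the classical Hadamard/Weierstrass strategy adapted to the subharmonic setting: separate the ``local'' and ``distant'' parts of the Riesz measure $\mu$, produce a subharmonic function with Riesz measure equal to $\mu$ as the sum of a logarithmic potential and a Weierstrass-type canonical integral, and then identify the leftover harmonic term via its growth.

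First I would split $\mu=\mu_1+\mu_2$ where $\mu_1=\mu|_{\{|\zeta|<R\}}$ has compact support and $\mu_2=\mu|_{\{|\zeta|\geq R\}}$. The function $v$ defined by \eqref{7c} is the logarithmic potential of $\mu_1$; since $\mu_1$ is finite and compactly supported, $v$ is subharmonic on $\C$ with Riesz measure $\mu_1$, and $v(z)=\log|z|\cdot\mu_1(\C)+\O(1)$ as $|z|\to\infty$. For $w$, the integrand in \eqref{7d} is the logarithm of the modulus of the Weierstrass primary factor $E_q(z/\zeta)$, and the standard pointwise bound $\bigl|\log|E_q(z/\zeta)|\bigr|\leq C\,|z/\zeta|^{q+1}$ for $|z|\leq |\zeta|/2$ together with the hypothesis \eqref{7a} (rewritten via integration by parts as a Stieltjes integral against $dn$) shows that the integral converges uniformly on compacta of $\C$ minus a small neighborhood of the support of $\mu_2$; the nearby contributions are handled by subharmonicity of $\zeta\mapsto\log|1-z/\zeta|$ in $\zeta$ and local finiteness of $\mu$. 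In this way $w$ is subharmonic on $\C$ with Riesz measure $\mu_2$.

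Consequently $v+w$ is subharmonic with Riesz measure $\mu$, so by uniqueness of the Riesz decomposition the difference $h:=u-v-w$ is harmonic on all of $\C$. To pin down the form of $h$, I would estimate $B(r,v)$ and $B(r,w)$ from above. For $v$ this is immediate: $B(r,v)=\log r\cdot\mu_1(\C)+\O(1)$. For $w$ the Weierstrass bound combined with \eqref{7a} yields $B(r,w)=\O(r^{\rho+\varepsilon})$ for every $\varepsilon>0$, using the relation \eqref{6k} between $N(r)$ and $B(r,u)$ to control $n(r)$ and hence the tail of the integral. Combined with the finite-order hypothesis $B(r,u)=\O(r^{\rho+\varepsilon})$, this yields $h(z)\leq C_\varepsilon|z|^{\rho+\varepsilon}$ for large $|z|$.

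Finally, a harmonic function on $\C$ with such a one-sided polynomial growth bound is a harmonic polynomial of degree at most $\rho$: expanding $h$ in the Fourier series $\sum_{n\geq 0}r^n(\alpha_n\cos n\theta+\beta_n\sin n\theta)$ (the negative-index terms vanish since $h$ is harmonic on all of $\C$) and applying the Poisson–Jensen mean-value estimate to $C_\varepsilon r^{\rho+\varepsilon}-h$ shows $\alpha_n=\beta_n=0$ for $n>\rho+\varepsilon$; letting $\varepsilon\to 0$ gives the bound $\deg h\leq \rho$. I expect the only real obstacle to be the bookkeeping in the growth estimate for $w$: one must handle both the regime $|\zeta|\geq 2|z|$, where the Weierstrass correction terms are decisive, and the regime $R\leq|\zeta|\leq 2|z|$, where one needs an $L^1$ estimate of $\log|1-z/\zeta|$ against $d\mu_2$ plus the obvious bound on the polynomial correction; these two contributions must each be shown to be $\O(r^{\rho+\varepsilon})$, which is where the minimality of $q$ in \eqref{7a} is used.
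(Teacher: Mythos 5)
Your argument is correct and is essentially the standard proof of the subharmonic Hadamard factorization as it appears in the literature. Note, however, that the paper does not prove this lemma at all: it is quoted verbatim as \cite[Theorem~4.2]{Hayman1976} (Hayman--Kennedy, \emph{Subharmonic functions}, vol.~I), with no argument given in the text. So there is nothing in the paper to compare your proof against; your blind reconstruction is simply the canonical proof that the cited reference would supply.

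A few small points worth tightening if you were to write this out in full. The growth bound you need for $w$ is the two-sided split you already identify: for $|\zeta|\geq 2|z|$ one uses $\bigl|\log|E_q(z/\zeta)|\bigr|\leq C|z/\zeta|^{q+1}$, and for $R\leq|\zeta|<2|z|$ one uses $\log|E_q(z/\zeta)|\leq C\bigl(\log^+|z/\zeta|+|z/\zeta|^q\bigr)$ together with a crude pointwise bound on the logarithmic potential part (the Riesz mass near $z$ can make $w$ very negative, which is harmless for an upper bound on $B(r,w)$). Combined with $n(r)=O(r^{\rho+\varepsilon})$ (which you correctly deduce from Jensen's inequality~\eqref{6k}), this gives $B(r,w)=O(r^{\rho+\varepsilon})$; the sharper statement $B(r,w)=o(r^{q+1})$ recorded in~\eqref{7d0} uses convergence of~\eqref{7a} rather than just finiteness of order, but for the present lemma the $O(r^{\rho+\varepsilon})$ bound suffices. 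Your observation that only a one-sided bound on $h$ is available, and that the Fourier-coefficient estimate must be run against $B(r,h)-h$ rather than $|h|$, is exactly the right point to flag; it is handled by
\begin{equation}
|\alpha_n|\,r^n\leq \frac{1}{\pi}\int_0^{2\pi}\bigl(B(r,h)-h(re^{i\theta})\bigr)\,d\theta = 2\bigl(B(r,h)-h(0)\bigr),
\end{equation}
and likewise for $\beta_n$, so that $\alpha_n=\beta_n=0$ for $n>\rho$. Finally, the minimality of $q$ is not needed for the decomposition itself (any $q$ with~\eqref{7a} convergent would do); its role is only to guarantee $q\leq\rho$, so that the degree bound on $h$ comes out as $\rho$ and not something larger.
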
 
We also note that the function $w$ defined by~\eqref{7d} satisfies
\begin{equation}\label{7d0}
B(r,w)=o(r^{q+1})
\end{equation}
as $r\to\infty$. This follows easily from~\cite[Lemma~4.4]{Hayman1976}; see also~\cite[p.~57, Remark~2]{Goldberg2008}.
\begin{lemma} \label{lemma-unb-subh}
Let $u$ be a subharmonic function of order at most~$1$. 
Suppose that there exists $R>0$ and $\varepsilon>0$ such that $u$ is harmonic
in each of the two domains $T_{\pm}:=\{z\colon |z|>R, |\arg z\pm \pi/2|<\varepsilon\}$.
Suppose also that $u$ is bounded on the imaginary axis.
Then $u$ is harmonic and of the form $u(z)=a\re z+ b$ with $a,b\in\R$.
\end{lemma}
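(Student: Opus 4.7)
The plan is to apply the half-plane Phragm\'en--Lindel\"of theorem (Lemma~\ref{lemma-pl}) to $u$ in each of the half-planes $H_\pm:=\{z\colon\pm\re z>0\}$ and then conclude by a Liouville-type argument. Set $M:=\sup_{t\in\R}u(it)$, which is finite by hypothesis; by upper semicontinuity $u-M$ satisfies the boundary condition~\eqref{6b} on $\partial H_\pm=i\R$. Since $u$ is harmonic on $T_\pm$, its Riesz measure $\mu$ is supported in $\{|z|\le R\}\cup(\C\setminus(T_+\cup T_-))$, so outside the disk of radius~$R$ it lies in the horizontal sectors $\{|\arg\zeta|\le\pi/2-\varepsilon\}\cup\{|\arg\zeta-\pi|\le\pi/2-\varepsilon\}$, on which $|\re\zeta|\ge|\zeta|\sin\varepsilon$.

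The first and main technical step is to verify the linear growth bound $u(z)\le A+B|z|$ required by~\eqref{6a}. For this, I would invoke Lemma~\ref{lemma-hada-subh} and write $u=v+w+h$, where $h$ is a harmonic polynomial of degree at most~$1$ and $v(z)=\O(\log|z|)$. If the genus $q$ equals~$0$, then~\eqref{7d0} gives $B(r,w)=o(r)$ and hence $B(r,u)=\O(r)$. To rule out $q=1$, I would apply Carleman's formula for subharmonic functions in each of the half-planes $H_\pm$: the boundary contributions are controlled on the imaginary axis by~$M$ and on $|z|=R$ by $B(R,u)\le R^{1+o(1)}$, while the mass side is bounded below using $|\re\zeta|\ge|\zeta|\sin\varepsilon$ on the support of $\mu$ outside $|z|\le R$. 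Combining these bounds yields $\int_R^\infty n(r)/r^2\,dr<\infty$, forcing $q=0$. This verification is the main obstacle.

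Once~\eqref{6a} is in hand, Lemma~\ref{lemma-pl} applied to $u-M$ in $H_+$ gives $u(z)\le M+L_+\re z$ on $H_+$, where $L_+:=\limsup_{x\to+\infty}u(x)/x$; a symmetric application to $z\mapsto u(-z)-M$ in $H_+$ gives $u(z)\le M-L_-\re z$ on $H_-$, where $L_-:=\limsup_{x\to+\infty}u(-x)/x$. A standard convexity analysis of the Phragm\'en indicator $h_*(\theta):=\limsup_{r\to\infty}u(re^{i\theta})/r$, which is trigonometrically convex of order~$1$ and satisfies $h_*(\pm\pi/2)\le0$ by the boundedness of $u$ on the imaginary axis, yields $h_*(\pm\pi/2)=0$ and consequently $L_++L_-=0$.

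Set $a:=L_+=-L_-$. On $H_+$, $u(z)-a\re z\le M+(L_+-a)\re z=M$; on $H_-$, $u(z)-a\re z\le M-(L_-+a)\re z=M$; on the imaginary axis, $u(z)-a\re z=u(iy)\le M$. Therefore $u-a\re z$ is subharmonic on $\C$ and bounded above, hence constant equal to some $b\in\R$ by the Liouville theorem for subharmonic functions. Thus $u(z)=a\re z+b$, as required.
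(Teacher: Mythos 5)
Your overall plan (reduce to linear growth, apply Phragm\'en--Lindel\"of in each half-plane, then Liouville) is genuinely different from the paper's, which works directly with the Hadamard decomposition $u=v+w+h$ and the quantity $Q(y)=w(iy)+w(-iy)$, disposing of the hard case $q=1$ by an explicit kernel computation culminating in~\eqref{5j}--\eqref{5o1}. Unfortunately your reduction does not go through: the Carleman step does not establish $q=0$.

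The difficulty is the arc term of Carleman's formula. In the right half-plane the formula gives, roughly,
\begin{equation}
\sin\varepsilon\int_{\rho_0}^{R/2}\frac{n_+(t)}{t^2}\,dt
\;\le\;
\frac{1}{2\pi}\int_{\rho_0<|y|<R}\Bigl(\frac{1}{y^2}-\frac{1}{R^2}\Bigr)\bigl(u(iy)+u(-iy)\bigr)\,dy
\;+\;\frac{1}{\pi R}\int_{-\pi/2}^{\pi/2}u(Re^{i\theta})\cos\theta\,d\theta
\;+\;\O(1).
\end{equation}
The imaginary-axis integral is bounded by the hypothesis $u\le M$ there, but the arc term is only $\le \tfrac{2}{\pi}B(R,u)/R$. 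Since the hypothesis is merely that $u$ has order at most~$1$ (no assumption of finite type), $B(R,u)/R$ can tend to infinity: e.g.\ $B(R,u)\sim R\log R$ is compatible with order~$1$, in which case the right-hand side grows like $\log R$, which is exactly the rate at which $\int^{R}n(t)/t^2\,dt$ would diverge when $q=1$. So Carleman yields no contradiction, and $q=1$ is not excluded. This is precisely the case the paper's proof is designed to handle (via~\eqref{7e}--\eqref{5o1}), and it is where the real work lies; your sketch does not surmount it. (Your own caveat that this ``is the main obstacle'' is accurate.)

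There is also a secondary gap in the final step. Trigonometric convexity of the indicator $h_*$ of a function of order~$1$ and finite type gives $h_*(\theta)+h_*(\theta+\pi)\ge 0$; combined with $h_*(\pm\pi/2)\le 0$ this yields $h_*(\pm\pi/2)=0$ and $L_++L_-\ge 0$, but not the reverse inequality $L_++L_-\le 0$ that you need to make $u(z)-a\re z$ bounded above on \emph{both} half-planes. To get equality one must use that the Riesz mass of $u$ sits in the horizontal sectors, which you note at the start but do not feed into the indicator argument. As written, that step is not justified, so even granting linear growth, the Liouville conclusion does not follow from what you have set up.
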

\begin{proof}
We apply Lemma~\ref{lemma-hada-subh}.
Since $u$ has order $1$, we have $q\in\{0,1\}$ in~\eqref{7d} and $h$ has the form 
\begin{equation}\label{7d1}
h(z) = \re(Az+B)
\end{equation}
for some $A,B\in\C$. For $y\in\R$ we then have
\begin{equation}\label{7d2}
h(iy) +h(-iy) = \re(Aiy+B)+\re(-Aiy+B)=2\re B. 
\end{equation}
Assuming that $u$ is not harmonic,
we may choose $R$ such that $\mu(\{z\colon |z|<R\})$ is positive and we find that
$v(z)\to\infty$ as $|z|\to\infty$. In particular, 
\begin{equation}\label{7d3}
v(iy) +v(-iy)\to\infty
\end{equation}
as $y\to\infty$.
Since $u$ is bounded on the imaginary axis, \eqref{7c0}, \eqref{7d2} and~\eqref{7d3} imply that
\begin{equation}\label{7d4}
Q(y):=w(iy) +w(-iy)\to-\infty
\end{equation}

First we consider the case $q=0$.
Define $w^*$ by $w^*(z)=w(z)+w(\overline{z})$.  Then $B(r,w^*)=o(r)$ as $r\to\infty$ by~\eqref{7d0}.
Moreover, $w^*(iy)=Q(y)$ so that $w^*$ is bounded on the imaginary axis by~\eqref{7d4}.
The Phragm\'en-Lindel\"of theorem (Lemma~\ref{lemma-pl}) now implies that $w^*$ is constant,
contradicting~\eqref{7d4}.

Now we consider the case $q=1$. In order to estimate $Q(y)$ in this case we note that
\begin{equation}\label{7e}
\begin{aligned}
&\quad \log\left|1-\frac{iy}{\zeta}\right|+\re\!\left(\frac{iy}{\zeta}\right)
+\log\left|1-\frac{-iy}{\zeta}\right|+\re\!\left(\frac{-iy}{\zeta}\right)
\\ &
=\log\left|1-\frac{iy}{\zeta}\right|
+\log\left|1-\frac{iy}{\overline{\zeta}}\right|
\\ &
=\log\left|1-i\frac{2y\re\zeta }{|\zeta|^2}-\frac{y^2}{|\zeta|^2}\right|
\\ &
=\frac12\log\!\left(\left(1-\frac{y^2}{|\zeta|^2}\right)^2+4\frac{y^2(\re\zeta)^2 }{|\zeta|^4}\right)
\end{aligned}
\end{equation}
so that
\begin{equation}\label{7e1}
Q(y)=\frac12\int_{|\zeta|\geq R}
\log\!\left(\left(1-\frac{y^2}{|\zeta|^2}\right)^2+4\frac{y^2(\re\zeta)^2 }{|\zeta|^4}\right) d\mu(\zeta).
\end{equation}
For $|\arg \zeta\pm\pi/2|\geq\varepsilon$ we have
$|\re \zeta|\geq \alpha|\zeta|$ with $\alpha:=\cos(\pi/2-\varepsilon)>0$
and thus
\begin{equation}\label{7f}
\begin{aligned}
\log\!\left(\left(1-\frac{y^2}{|\zeta|^2}\right)^2+4\frac{y^2(\re\zeta)^2 }{|\zeta|^4}\right)
&\geq\log\!\left(\left(1-\frac{y^2}{|\zeta|^2}\right)^2+4\alpha^2\frac{y^2}{|\zeta|^2}\right)
\\ &
=\log\!\left(1+\left(4\alpha^2-2\right) \frac{y^2}{|\zeta|^2}+\frac{y^4}{|\zeta|^4}\right) .
\end{aligned}
\end{equation}
As $u$ is harmonic in $T_\pm$ we find that~\eqref{7f} holds
for every $\zeta$ in the support of $\mu$ which satisfies $|\zeta|\geq R$.
We put $n_R(r)=\mu\{z\colon R\leq |z|\leq r\}$ and note that $n_R(r)=n(r)-\mu(\{z\colon |z|<R\})$;
that is, $n_R(r)$ and $n(r)$ differ only by a constant.
We deduce from~\eqref{7e1} and~\eqref{7f}, using also integration by parts, that
\begin{equation}\label{7g}
\begin{aligned}
Q(y)
&\geq \frac12
\int_R^\infty \log\!\left(1+\left(4\alpha^2-2\right) \frac{t^2}{|\zeta|^2}+\frac{t^4}{|\zeta|^4}\right)dn(t)
\\ &
= 2\int_0^\infty n_R(t) K\!\left(\frac{y}{t}\right) \frac{dt}{t} ,
\end{aligned}
\end{equation}
where
\begin{equation}\label{5h}
K(x):=\frac{x^2\left(2\alpha^2-1+x^2\right)}{1+\left(4\alpha^2-2\right)x^2+x^4}
=\frac{x^2\left(\beta+x^2\right)}{1+2\beta x^2+x^4}
\end{equation}
with $\beta:=2\alpha^2-1=2\cos^2(\pi/2-\varepsilon)-1=\cos(\pi-2\varepsilon)$.
It is easy to see that there exists a constant $c_1$ depending only on $\alpha$ such that
\begin{equation}\label{5k}
|K(x)|\leq c_1 x^2
\quad\text{for}\ x\geq 0.
\end{equation}
In fact, this holds with $c_1:=1/(4\alpha\sqrt{1-\alpha^2})$. Moreover, we have
\begin{equation}\label{5k1}
|K(x)|\leq \frac{4x^2}{2+x^4}
\quad\text{for}\ x\geq 2.
\end{equation}

By~\eqref{7d4} there exists $y_0>0$ such that $Q(y)\leq 0$ for $y\geq y_0$.
For $0<\delta<1$ we thus have
 \begin{equation}\label{5i}
\begin{aligned}
0
&\geq  \int_{y_0}^\infty \frac{1}{y^{2+\delta}} Q(y) dy 
\\ &
= \int_{y_0}^\infty  \frac{1}{y^{2+\delta}}\int_0^\infty n_R(t) K\!\left(\frac{y}{t}\right)\frac{dt}{t} dy
\\ &
=\int_{0}^\infty n_R(t) \int_{y_0}^\infty  \frac{1}{y^{2+\delta}}K\!\left(\frac{y}{t}\right)\frac{1}{t} dy \;dt
\\ &
=\int_{0}^\infty  \frac{n_R(t)}{t^{2+\delta}} \int_{y_0/t}^\infty \frac{K(s)}{s^{2+\delta}} ds \;dt .
\end{aligned}
\end{equation}
Here we have changed the order of integration. This is justified by the Fubini-Tonelli theorem, since
by~\eqref{5k} and~\eqref{5k1} the above integrals are finite if $K(\cdot)$ is replaced by $|K(\cdot)|$.

It follows from~\eqref{5k} that
\begin{equation}\label{5l}
 \int_0^{y_0/t} \frac{K(s)}{s^{2+\delta}} ds
\leq c_1 \int_0^{y_0/t} \frac{ds}{s^{\delta}} 
=\frac{c_1}{1-\delta}\left(\frac{y_0}{t}\right)^{1-\delta}
\end{equation}
and hence
\begin{equation}\label{5m}
\int_{0}^\infty \frac{n_R(t)}{t^{2+\delta}} \int_0^{y_0/t} \frac{K(s)}{s^{2+\delta}} ds \;dt 
\leq 
c_2:=\frac{c_1 y_0^{1-\delta}}{1-\delta}
\int_{0}^\infty  \frac{n_R(t)}{t^{3}} dt <\infty
\end{equation}
by~\eqref{7a}. Combining this with~\eqref{5i} we deduce that
\begin{equation}\label{5n}
\int_{0}^\infty \frac{n_R(t)}{t^{2+\delta}} dt \cdot \int_0^{\infty} \frac{K(s)}{s^{2+\delta}} ds 
\leq c_2
\end{equation}
for some constant $c_2$. Note that $c_2$ remains bounded as $\delta\to 0$.

We have
\begin{equation}\label{5n1}
\int_{0}^\infty \frac{K(s)}{s^{2+\delta}} ds 
=\frac12  \int_{0}^\infty x^{-\gamma} \frac{x+\beta}{1+2\beta x+x^2} dx
\end{equation}
with $\gamma:=(1+\delta)/2$.
The computation of the integral on the right hand side is a standard 
application of the residue theorem which yields that
\begin{equation}\label{5j}
 \int_{0}^\infty \frac{K(s)}{s^{2+\delta}} ds =
\frac12 \frac{\pi}{\sin(\pi\gamma)}\cos(\gamma(\pi-\arccos(-\beta)))
= \frac12 \frac{\pi}{\sin(\pi\gamma)}\cos(\gamma(\pi-2\varepsilon)).
\end{equation}
Since $\gamma\to 1/2$ as $\delta\to 0$ we thus have
\begin{equation}\label{5j1}
\lim_{\delta\to 0} \int_{0}^\infty \frac{K(s)}{s^{2+\delta}} ds
= \frac12 \frac{\pi}{\sin(\pi/2)}\cos\!\left(\frac12 \pi-\varepsilon\right)  =\frac12\pi\alpha >0.
\end{equation}
Moreover,
\begin{equation}\label{5o}
\int_{0}^\infty \frac{n_R(t)}{t^{2}} dt = \infty,
\end{equation}
since $q=1$, and $q$ is chosen as the smallest integer such that~\eqref{7a} holds. Thus
\begin{equation}\label{5o1}
\lim_{\delta\to 0}  \int_{0}^\infty \frac{n_R(t)}{t^{2+\delta}} dt = \infty.
\end{equation}
Taking the limit as $\delta\to 0$ in~\eqref{5n} yields a contradiction to~\eqref{5j1} and~\eqref{5o1}.
\end{proof}

\section{Proofs of the Theorems}\label{proofs}
\begin{proof}[Proof of Theorem~\ref{thm2}]
Let $\theta_j$ be the opening angle of $S_j$. Since only one of the two
sectors can have opening angle less than~$\pi$, 
we may assume without loss of generality that $\theta_0<\pi$. 
We may also assume that $f(0)\notin\{0,1\}$. 

First we show that the genus of $f$ is at least~$1$.
Suppose that this is not the case so that $f$ is of genus~$0$. Then $f$ has the form
\begin{equation}\label{2a}
f(z)=f(0)\lim_{n\to\infty} \prod_{k=1}^n \left(1-\frac{z}{a_k}\right)
=1+(f(0)-1)\lim_{n\to\infty} \prod_{k=1}^n \left(1-\frac{z}{b_k}\right),
\end{equation}
with all but finitely many $a_k$ contained in $S_0$  and all but finitely many
$b_k$ contained in~$S_1$. The Gauss-Lucas theorem and Hurwitz's theorem imply that
all zeros of $f'$ are contained in the convex hull of the $a_k$ as well as in the convex
hull of the $b_k$. Thus $f'$ has only finitely many zeros.
Since we assumed that $f$ has genus $0$, this contradicts our hypothesis that $f$ 
is transcendental. Hence $f$ has genus at least~$1$.

Let $\rho$ be the order of~$f$.  Since $f$ has genus at least~$1$ we have $\rho\geq 1$.
Let $M(r)$ denote the maximum modulus of~$f$.
We proceed as in~\cite{BEH} and note that since $f$ is of finite order,
there exists a sequence  $(r_k)$ tending to $\infty$ such that 
\begin{equation} \label{A}
\log M(tr_k)=O(\log M(r_k))\quad \text{as}\ k\to\infty,
\end{equation}
for every $t>1$. The existence of such a sequence $(r_k)$ for a function $f$ of finite order is
well-known and easy to prove, but it is also an immediate consequence 
of a result of Drasin and Shea~\cite{Drasin1972} on the existence of P\'olya peaks.
Put
\begin{equation}\label{rho1}
\rho^*:=\sup\left\{ p\in {\mathbb R}  \colon  \limsup_{r,t\to\infty}
\frac{\log M(tr)}{t^p\log M(r)}=\infty\right\}
\end{equation}
and
\begin{equation}\label{rho2}
\rho_*:=\inf\left\{ p \in {\mathbb R}  \colon  \liminf_{r,t\to\infty}
\frac{\log M(tr)}{t^p\log M(r)}=0\right\} .
\end{equation}
The result of Drasin and Shea~\cite{Drasin1972} says that 
\begin{equation}\label{rr}
0\leq\rho_*\leq\rho\leq\rho^*\leq\infty
\end{equation}
and if $\lambda\in\R$ satisfies $\rho_*\leq\lambda\leq\rho^*$,
then $\log M(r)$ has a sequence of P\'olya peaks of order $\lambda$; that is, there exists
a sequence $(r_k)$ tending to $\infty$ such that if $\varepsilon>0$, then
\begin{equation}\label{pp}
\log M(tr_k)\leq (1+\varepsilon)t^{\lambda}\log M(r_k)
\quad\text{for}\  \varepsilon\leq t\leq
\frac{1}{\varepsilon},
\end{equation}
provided $k$ is large enough.  

For a sequence $(r_k)$ satisfying~\eqref{A} we consider, as in~\cite{BEH},
the subharmonic functions
\begin{equation}\label{duv}
u_k(z) := \frac{\log|f(r_kz)|}{\log M(r_k)}
\quad\text{and}\quad
v_k(z) :=  \frac{\log|f(r_kz)-1|}{\log M(r_k)}.
\end{equation}
Arguing as in~\cite[p.~97]{BEH} we can deduce from 
\cite[Theorems 4.1.8 and 4.1.9]{Hoermander1990} or
\cite[Theorems 3.2.12 and 3.2.13]{Hoermander1994} that,
passing to a subsequence if necessary, the limits
\begin{equation}\label{lim}
u(z):=\lim_{k\to\infty}\frac{\log|f(r_kz)|}{\log M(r_k)}
\quad\text{and}\quad
v(z):=\lim_{k\to\infty}\frac{\log|f(r_kz)-1|}{\log M(r_k)}
\end{equation}
exist and are subharmonic in $\C$.
Here the convergence is in the Schwartz space~$\mathscr{D}'$.
This implies that we also have convergence in $L^1_{\mathrm{loc}}$.

Moreover, the limits in~\eqref{lim} have the following properties:
\begin{enumerate}
\item[$(a)$]
$\max\{u(z),0\}=\max\{v(z),0\}$  for all $z\in \C$;
\item[$(b)$]
$\{ z \colon  u(z)<0\}\cap\{ z \colon  v(z)<0\}=\emptyset$;
\item[$(c)$]
$u$ is harmonic in $\C\backslash S_0$ and $v$ is harmonic in $\C\backslash S_1$;
\item[$(d)$]
$\max_{|z|=1}u(z) =\max_{|z|=1}v(z)=1$.
\end{enumerate}
If $(r_k)$ is a sequence of P\'olya peaks of order~$\lambda>0$, then we also have
\begin{enumerate}
\item[$(e)$]
$u(0)=v(0)=0$;
\item[$(f)$]
$\max\{u(z),v(z)\}\leq |z|^\lambda$ for all $z\in \C$.
\end{enumerate}
We refer to~\cite[p.~97]{BEH} for the deduction of these properties.

Suppose first that $(r_k)$ is a sequence of P\'olya peaks of order $\lambda>0$ 
so that $(a)$--$(f)$ hold. Note that such a sequence exists by~\eqref{rr} since $\rho\geq 1$.

Let $P$ be the set where one of the functions $u$ and $v$ is positive.
In view of $(a)$ this set coincides with the set where both functions are positive.
Let $z_0\in P$. Then $z_0\neq 0$ by $(e)$ and since $S_0\cap S_1=\{0\}$ we deduce
from $(c)$ that one of the functions $u$ and $v$ is harmonic in 
some neighborhood of~$z_0$. In particular,
it is continuous in $z_0$ and thus positive in some (possibly smaller) neighborhood of~$z_0$.
Hence $P$ is open.

Let $N$ be the set of points where at least one of the functions $u$ and $v$ is negative.
Since subharmonic functions are upper semicontinuous, $N$ is also open.

Thus the complement $E:=\C\setminus (P\cup N)=\{z\colon u(z)=v(z)=0\}$ is closed. We will show that $E$ has
no interior. Suppose to the contrary that $E$ has an interior point $z_0$.
Without loss of generality we may assume that $z_0\notin S_1$. By $(c)$, 
the function $v$ is harmonic in $\C\setminus S_1$, which contains a
neighborhood of~$z_0$. It follows that $v(z)=0$ for all $z\in \C\setminus S_1$.
By $(a)$ we have $u(z)\leq 0$ for all $z\in \C\setminus S_1$.
If $u(z_1)<0$ for some $z_1\in  \C\setminus S_1$, then 
$u(z)<0$ for all $z\in  \C\setminus S_1$ by the maximum principle.
This is a contradiction since $u(z_0)=0$ and we assumed that $z_0\in\C\setminus S_1$.
Hence $u(z)=0$ for all $z\in \C\setminus S_1$. Since $u$ is harmonic in $\C\setminus S_0$
by $(c)$, we find that $u(z)= 0$ for all $z\in\C$, a contradiction.
Thus $E$ has no interior.

Our next goal is to show that either $N\subset S_0\cup S_1$ or $N\supset \C\setminus S_1$,
where the latter case can occur only if $\theta_1=\pi$. To this end,
let $Q$ be a component of $N$ such that $u(z)<0$ for $z\in Q$.
We will show that $\partial Q\subset S_1$. In order to do so, suppose that $z_0\in \partial Q\backslash S_1$.
Then $v$ is harmonic in some neighborhood $V$ of $z_0$. 
By $(a)$ and $(b)$ we have $v(z)=0$ for $z\in Q$. It follows that $v(z)=0$ for $z\in V$.
On the other hand, $V$ also contains a point $z_1$ such that $u(z_1)>0$.
Thus $v(z_1)=u(z_1)>0$ by~$(a)$. This is a contradiction. Thus $\partial Q\subset S_1$. 
If $\theta_1<\pi$, then $Q$ cannot contain $\C\setminus S_1$ by Lemma~\ref{lemma-beh}.
Hence $Q\subset S_1$ if $\theta_1<\pi$.
But if $\theta_1=\pi$, then it is also possible that $Q\supset \C\setminus S_1$.

Similarly, if $Q$ is a component of $N$ such that $v(z)<0$ for $z\in Q$,
then $\partial Q\subset S_0$. Since we assumed that $\theta_0<\pi$, this implies 
together with Lemma~\ref{lemma-beh} that $Q\subset S_0$. 
Overall we see that $N\subset S_0\cup S_1$ or $N\supset \C\setminus S_1$, as claimed above.
In the latter case, $u(z)<0$ for $z\in \C\setminus S_1$.

We first consider the case that $N\subset S_0\cup S_1$.
It follows that $u(z)\geq 0$ and $v(z)\geq 0$ for $z\in \C\setminus (S_0\cup S_1)$.
Since both $u$  and $v$ are harmonic in $\C\setminus (S_0\cup S_1)$ 
we actually have $u(z)>0$ and $v(z)>0$ for $z\in\C\setminus (S_0\cup S_1)$
by the minimum principle. Hence $u(z)=v(z)$ for $z\in \C\setminus (S_0\cup S_1)$ by $(a)$. 
Thus the function
\begin{equation}\label{defw}
w(z):=
\begin{cases}
u(z)& \text{if} \ z\in\C\backslash S_0,\\
v(z)& \text{if} \ z\in\C\backslash S_1,
\end{cases}
\end{equation}
is well-defined and harmonic in $\C\backslash\{0\}$. By the removable
singularity theorem~\cite[Theorem~2.3]{Axler1992} it is harmonic in $\C$. 
Hence $w$ has the form $w=\re g$ for some entire function~$g$.
Since $w(0)=0$ we may choose $g$ such that $g(0)=0$.

Since $(r_k)$ is a sequence of P\'olya peaks of order~$\lambda>0$, 
we can deduce from $(f)$ that 
\begin{equation}\label{estw1}
w(z)=\re g(z)\leq |z|^\lambda.
\end{equation}
This implies that $g$ is a polynomial. In fact, $\lambda$ is a positive integer and we have
\begin{equation}\label{formg}
g(z)=cz^\lambda
\end{equation}
for some $c\in\C$. Moreover, $|c|=1$ by~$(d)$.

In the above reasoning we can take any $\lambda\in[\rho_*,\rho^*]\cap (0,\infty)$,
but the conclusion gives that $\lambda$ is a positive integer. Together with~\eqref{rr}
we conclude that 
\begin{equation}\label{rhos}
\rho_*=\rho^*=\rho\in\N.
\end{equation}
Thus the only possible value for $\lambda$ is $\lambda=\rho$. It follows from~\eqref{rhos} that~\eqref{A} 
is satisfied for any sequence $(r_k)$ tending to~$\infty$. We mention that this kind of 
argument appears first in~\cite[Section~7]{Eremenko1989} and~\cite[p.~1209]{Eremenko1993},
and it was also used in~\cite[p.~100]{BEH}.

More precisely, \eqref{rhos} implies that for any $\delta>0$ there exist $r_0,t_0>0$ such that
\begin{equation}\label{2c}
t^{\rho-\delta}\log M(r) \leq \log M(tr)\leq t^{\rho+\delta}\log M(r) 
\quad\text{for}\  r\geq r_0 \ \text{and}\  t\geq t_0.
\end{equation}
Dropping the assumption that $(r_k)$ is a sequence of P\'olya peaks we 
still have properties $(a)$--$(d)$, but instead of $(f)$ we can deduce from~\eqref{2c} only that
\begin{enumerate}
\item[$(f')$]
$\max\{u(z),v(z)\}\leq
\begin{cases}
|z|^{\rho+\delta} \quad\text{for}\ |z|\geq t_0,\\
|z|^{\rho-\delta} \quad\text{for}\ |z|\leq 1/t_0.
\end{cases}
$
\end{enumerate}
This still yields~$(e)$.

Still assuming that the set $N$ where one of the functions $u$ and $v$ is negative
is contained in $S_0\cup S_1$,
we again find that the function $w$ defined by~\eqref{defw} is harmonic and of 
the form $w=\re g$ for some entire function~$g$.
Instead of~\eqref{estw1}, which was obtained from~$(f)$, we now deduce from~$(f')$ that
\begin{equation}\label{estw2}
w(z)=\re g(z)\leq 
\begin{cases}
|z|^{\rho+\delta} \quad\text{for}\ |z|\geq t_0,\\
|z|^{\rho-\delta} \quad\text{for}\ |z|\leq 1/t_0.
\end{cases}
\end{equation}
This implies that $g$ is a polynomial of degree at most $\rho+\delta$ which has a zero 
of multiplicity at least $\rho-\delta$ at the origin.
Choosing $\delta<1$ we again find that $g$ has the form~\eqref{formg} with $\lambda=\rho$; that is, 
\begin{equation}\label{formg1}
g(z)=cz^\rho.
\end{equation}

To summarize, every sequence tending to $\infty$ has a subsequence $(r_k)$ such that 
\begin{equation}\label{lim1}
\lim_{k\to\infty}\frac{\log|f(r_kz)|}{\log M(r_k)}
=\re (cz^\rho)
\quad\text{for}\ z\in\C\setminus S_0
\end{equation}
while
\begin{equation}\label{lim2}
\lim_{k\to\infty}\frac{\log|f(r_kz)-1|}{\log M(r_k)}
=\re (cz^\rho)
\quad\text{for}\ z\in\C\setminus S_1.
\end{equation}
Next we note that $\log|f|=\re(\log f)$ for a branch  $\log f$ of the logarithm.
Also, the derivative $h'$ of a holomorphic function $h$ can be computed from its real part,
via $h'=\partial \re h/\partial x-i\partial\re h/\partial y$, or via
\begin{equation}\label{schwarz1}
h'(z)=\frac{1}{\pi i} \int_{|\zeta-a|=t}\frac{\re h(\zeta)}{(\zeta-z)^2}d\zeta
\quad\text{for}\ |z-a|<r 
\end{equation}
if $h$ is holomorphic in $\{z\colon |z-a|\leq r\}$. We can thus deduce from~\eqref{lim1} that
\begin{equation}\label{lim1a}
\lim_{k\to\infty}\frac{r_kf'(r_kz)}{f(r_kz) \log M(r_k)} =c\rho z^{\rho-1}
\quad\text{for}\ z\in\C\setminus S_0.
\end{equation}
In particular, if $T_1$ is a closed subsector of $\C\setminus S_1$, then $f'$ has 
only finitely many zeros in~$T_1$. Applying the same argument to~\eqref{lim2} yields
that $f'$ has  only finitely many zeros in any closed subsector $T_0$ of $\C\setminus S_0$.
As we may choose $T_0$ and $T_1$ such that $T_0\cup T_1=\C$ we conclude that $f'$ has
only finitely many zeros in~$\C$. Since $f$ and hence $f'$ have finite order this implies
that $f'$ has the form $f'=pe^q$ with polynomials $p$ and~$q$.
Thus $f$ has the form~\eqref{1d}.

It remains to consider the case that  $N\supset \C\setminus S_1$,
with $\theta_1=\pi$ and $u(z)<0$ for $z\in\C\setminus S_1$. We may assume without 
loss of generality that $S_1$ is the left half-plane and $S_0=\{z\colon |\arg z|\leq \pi-\varepsilon\}$
for some $\varepsilon>0$. It follows from $(e)$ and $(f)$ that $u$ satisfies 
the hypotheses of Lemma~\ref{lemma-hp}. This lemma then  yields that $\rho=1$.
Since $u$ is harmonic in $C\setminus S_0$ by~$(c)$, Lemma~\ref{lemma-unb-subh} yields
that $u$ has the form $u(z)=az+b$.
Since $u(0)=0$ we have $b=0$ and using $(d)$ we see that $|a|=1$ and in fact $a=-1$.
Hence $u(z)=-\re(z)$; that is,
\begin{equation}\label{5a0}
u_k(z) = \frac{\log|f(r_kz)|}{\log M(r_k)} \to -\re z .
\end{equation}
As before we can now deduce that the limits~\eqref{lim} exist not only 
if $(r_k)$ is chosen as a sequence of P\'olya peaks, but 
in fact for every sequence $(r_k)$ tending to $\infty$.
Once this is known, it is not difficult to see that the question whether $N\subset S_0\cup S_1$
or $N\supset \C\setminus S_1$ does not depend on the choice of the sequence~$(r_k)$.

Thus we only have to deal with the case that~\eqref{5a0} holds
for every sequence $(r_k)$ tending to $\infty$.
We will show that this implies that $f$ has the form $f(z)=e^{az+b}$ with constants $a$ and~$b$.
In particular, $f$ has the form~\eqref{1d}.

It follows from~\eqref{5a0} that
there is a curve $\gamma$ tending to $\infty$ near the imaginary axis in
both directions such that $|f(z)|=1$ for $z\in\gamma$. Suppose that $f$ has at least one zero.
Then $f$ is unbounded on the imaginary axis by Lemma~\ref{lemma-unb-subh}, applied
to the subharmonic function $\log|f|$. Thus there exists a real sequence $(y_k)$ such that
\begin{equation}\label{5p}
T_k:=|f(iy_k)|\to\infty
\end{equation}
as $k\to\infty$. Without loss of generality we may assume that $y_k\to+\infty$.
Assuming that $T_k>1$ there exists $x_k>0$ such that $z_k:=x_k+iy_k$ lies on the curve~$\gamma$.
We have $x_k=o(y_k)$ as $k\to\infty$ by~\eqref{5a0}.

As before it follows from~\eqref{5a0} with $r_k=y_k$ by differentiation that
\begin{equation}\label{5r}
\lim_{k\to\infty}\frac{y_kf'(y_kz)}{f(y_kz) \log M(y_k)} = -1
\quad\text{for}\ z\in\C\setminus S_0.
\end{equation}
Put $L_k:=(\log M(y_k))/y_k$. It follows from~\eqref{5r} that 
\begin{equation}\label{5s}
\frac12 L_k \leq \left|\frac{f'(z)}{f(z)}\right|\leq 2 L_k
\quad\text{for}\ |z-iy_k|\leq 2x_k,
\end{equation}
provided $k$ is large enough. Hence 
\begin{equation}\label{5t}
\begin{aligned}
\log T_k 
&=\log|f(iy_k)|-\log|f(x_k+iy_k)|
\\ &
= \re\!\left(-\int_0^{x_k}\frac{f'(x+iy_k)}{f(x+iy_k)}dx\right)
\\ &
\leq \int_0^{x_k}\left|\frac{f'(x+iy_k)}{f(x+iy_k)}\right|dx
\\ &
\leq 2 x_k L_k .
\end{aligned}
\end{equation}
Let now $\gamma_k$ be the component of the intersection of $\gamma$ with the disk 
$\{z\colon |z-z_k|\leq x_k\}$ that contains $z_k$.
Then $f\circ \gamma_k$ is a curve contained in the unit circle.
We have $f'(z)\neq 0$ for $z\in\gamma_k$ by~\eqref{5s}, provided $k$ is large enough.
This implies that $\arg f(z)$ is monotone as $z$ runs through~$\gamma_k$. Moreover, we have
\begin{equation}\label{5u}
\length(\gamma_k)\geq 2x_k
\end{equation}
and thus
\begin{equation}\label{5v}
\length(f\circ\gamma_k) \geq 2x_k \inf_{z\in\gamma_k}|f'(z)|
= 2x_k \inf_{z\in\gamma_k}\left|\frac{f'(z)}{f(z)}\right| 
\end{equation}
for large~$k$.
Combining this with~\eqref{5s}, \eqref{5t} and~\eqref{5p} we find that
\begin{equation}\label{5w}
\length(f\circ\gamma_k) \geq x_k L_k\geq \frac12 \log T_k > 2\pi
\end{equation}
for large~$k$.
We conclude that $f\circ\gamma_k$ wraps around the unit circle at least once.
Hence $\gamma_k$ contains a $1$-point of $f$, contradicting the hypothesis that
all $1$-points are in the left half-plane.

It follows that $f$ has no zeros. Hence $f$ is of the form $f(z)=e^{az+b}$.
\end{proof}

\begin{proof}[Proof of Theorem~\ref{thm1}]
Let $f$ be a transcendental entire function for which all but finitely many zeros are
in $S_0$ while all but finitely many $1$-points are in~$S_1$.
Theorem~\ref{thm2} yields that $f$ has the form~\eqref{1d} with polynomials $p$ and~$q$.
We show first that the degree of $q$ is even.
To see this, let $\phi_k$ and $a_k$ be given by~\eqref{3a} and~\eqref{3b}
and suppose that $d:=\deg(q)$ is odd, say $d=2m-1$ with $m\in\N$.
First we note that $a_k\in\{0,1\}$ for all $k\in\{1,\dots,d\}$ since otherwise both 
zeros and $1$-points would accumulate at the rays $\arg z=\phi_k\pm\pi/(2d)$.

Now fix $k\in\{1,\dots,d\}$ and suppose that $a_k=0$. Then the $1$-points of $f$ accumulate at 
the ray $\arg z=\phi_k+\pi/(2d)$.
Since all but finitely many $1$-points are contained in a sector of opening 
less than~$\pi$, they cannot accumulate at the ray $\arg z=\phi_k+\pi/(2d)+\pi=\phi_{k+m}-2\pi/(2d)$.
Here the index in $\phi_k$ is taken modulo~$d$; that is, $\phi_j=\phi_k$ if $j\equiv k\pmod d$.
Hence the $1$-points also do not accumulate at the ray $\arg z=\phi_{k+m}+2\pi/(2d)$.
It follows that the zeros accumulate at the ray $\arg z=\phi_{k+m}+2\pi/(2d)$.
Repeating this argument we deduce that the $1$-points accumulate at the rays
$\arg z=\phi_{k+2m}\pm2\pi/(2d)=\phi_{k+1}\pm2\pi/(2d)$.
Induction now shows that the $1$-points accumulate at the rays 
$\arg z=\phi_{j}\pm2\pi/(2d)$ for all $j\in\{1,\dots,d\}$, contradicting 
our hypothesis that they are contained in a sector of opening angle less than~$\pi$.
Thus $d$ is even.

Suppose first that $d=2$. Without loss of generality we may assume that $\theta_0<\pi/2$.
Then, for $k\in\{1,2\}$, the zeros of $f$ cannot  accumulate at both rays $\arg z=\phi_k\pm\pi/4$.
As explained in Section~\ref{examples}, this implies that the $1$-points of $f$ accumulate at the
rays $\arg z=\phi_1\pm\pi/4$ and $\arg z=\phi_2\pm\pi/4$.
This contradicts the assumption that all but finitely many $1$-points are in~$S_1$.

Suppose now that $d\geq 4$.
It follows from the hypothesis that there exists an open sector $T$ of opening angle greater
than $\pi/4$ such that $f$ has only finitely many zeros and $1$-points in~$T$.
However, since $d\geq 4$, there exists $k\in\{1,\dots,d\}$ such that one of the rays 
$\arg z=\phi_k+\pi/(2d)$ and $\arg z=\phi_k-\pi/(2d)$ is contained in~$T$. Since the
zeros or $1$-points accumulate at these rays, this is a contradiction.
\end{proof}
\begin{proof}[Proof of Theorem~\ref{thm3}]
Let again $\phi_k$ and $a_k$ be given by~\eqref{3a} and~\eqref{3b}, with
$d:=\deg(q)$.
As in the proof of Theorem~\ref{thm1} we have $a_k\in\{0,1\}$ for all $k\in\{1,\dots,d\}$.
By hypothesis there exists a closed sector $T$ of opening angle greater than $\pi/3$ which
intersects $H$ and $S$ only in~$0$.
This implies that $T$ does not intersect any of the rays $\arg z=\phi_{k}\pm \pi/(2d)$.
It follows that $\pi/d>\pi/3$ and thus $d<3$.

Suppose that $d=2$. Since the $1$-points are contained in a half-plane we have $a_k=0$ for
some $k\in\{1,2\}$. This implies that the zeros accumulate at both rays $\arg z=\phi_{k}\pm\pi/4$.
Hence there are infinitely many zeros not contained in~$S$, a contradiction.

It follows that $d=1$. This implies that $f$ has the form given.
\end{proof}

\noindent
%Walter Bergweiler\\
Mathematisches Seminar\\
Christian-Albrechts-Universit\"at zu Kiel\\
Ludewig-Meyn-Str.\ 4\\
24098 Kiel\\
Germany\\
{\tt Email: bergweiler@math.uni-kiel.de}

\medskip

\noindent
%Alexandre Eremenko\\
Department of Mathematics\\
Purdue University\\
West Lafayette, IN 47907\\
USA\\
{\tt Email: eremenko@math.purdue.edu}
\end{document}